\documentclass[11pt]{article}

\usepackage{amsmath,amssymb,amsthm,mathtools}
\usepackage[margin=1in]{geometry}
\usepackage{hyperref}
\usepackage{url}
\usepackage{authblk}

\newcommand{\R}{\mathbb{R}}
\newcommand{\N}{\mathbb{N}}
\newcommand{\Pbb}{\mathbb{P}}
\newcommand{\Ebb}{\mathbb{E}}
\newcommand{\cN}{\mathcal{N}}

\newtheorem{theorem}{Theorem}
\newtheorem{lemma}[theorem]{Lemma}
\newtheorem{proposition}[theorem]{Proposition}
\newtheorem{corollary}[theorem]{Corollary}
\newtheorem{remark}[theorem]{Remark}

\newcommand{\phin}{\varphi}
\newcommand{\Phib}{\overline{\Phi}}

\title{The sharp one-dimensional convex sub-Gaussian comparison constant}
\author[1]{Damek Davis\thanks{\texttt{damek@wharton.upenn.edu}. Research supported by NSF DMS award 2523384.}}
\author[2]{Sam Power}
\affil[1]{Department of Statistics and Data Science, The Wharton School, University of Pennsylvania}
\affil[2]{School of Mathematics, University of Bristol}
\date{}

\begin{document}
\maketitle

\begin{abstract}
Let $X$ be an integrable real random variable with mean zero and two-sided sub-Gaussian tail
$\mathbb{P}(|X|>t)\le 2e^{-t^{2}/2}$ for all $t\ge 0$.
We determine the smallest constant $c_\star$ such that $X$ is dominated in convex order by $c_\star G$,
where $G$ is standard normal.
Equivalently, $c_\star^2$ is the sharp one-dimensional convex sub-Gaussian comparison constant
appearing in the \emph{Optimization Constants in Mathematics} repository~\cite{optimization-constants-repo}.
We show that $c_\star$ is given by an explicit system of one-dimensional equations and is attained by an extremal distribution that saturates the tail constraint.
Numerically, $c_\star \approx 2.30952$ (so $c_\star^2 \approx 5.33386$). We also determine the analogous sharp constant under a two-sided sub-exponential tail bound, with convex domination by a scaled Laplace law.
Finally, we record two higher-dimensional consequences: a sequential tensorization principle for multivariate convex domination, and a dimension-free Gaussian comparator for the cone generated by convex ridge functions (the linear convex order).
\end{abstract}

\section{Introduction}

A recent theorem of van Handel \cite{vanHandelSubgComparison} shows that if $X$ is a random vector in $\R^d$ such that
$\Pbb (|\langle v, X \rangle| > t) \le 2e^{-t^{2}/2}$ for all $\|v\|_2 = 1$ and $t\ge 0$, then $X$ is dominated in convex order by a universal constant times a standard Gaussian vector.
The optimal value of this universal constant is not known, even in dimension~$1$.

This note resolves the one-dimensional case.
We compute the sharp constant and exhibit an extremal distribution.
The argument is elementary and rests on two classical facts: (i) one-dimensional convex order is equivalent to comparison of the stop-loss transforms $u\mapsto \Ebb[(X-u)_+]$ \cite[Ch.~3]{ShakedShanthikumar}; (ii) under a two-sided tail constraint, the stop-loss transform is maximized by a distribution that saturates the constraint and has a single flat region.

\subsection*{Setup and the constant}

Throughout, $G \sim \cN (0,1)$, $\phin (x) := (2\pi)^{-1/2} e^{-x^{2}/2}$ is the standard normal density, and $\Phib(x) := \int_x^\infty \phin(t) \, \mathrm{d}t$ is the Gaussian tail.
We call $X$ \emph{$1$-sub-Gaussian in the tail sense} if
\begin{equation}
\label{eq:subg_tail}
\Ebb[X] = 0
\qquad\text{and}\qquad
\Pbb(|X|>t) \le s_G(t) := \min \{ 1,  2e^{-t^2/2} \} \qquad\text{for all } t \ge 0.
\end{equation}
Define the one-dimensional comparison constant
\begin{equation}
\label{eq:def_cstar}
c_\star := \inf \Bigl\{ c > 0 :\ \text{every $X$ satisfying \eqref{eq:subg_tail} obeys } X \preceq_{cx} cG \Bigr\},
\end{equation}
where $X\preceq_{cx}Y$ denotes convex domination: $\Ebb[f(X)] \le \Ebb[f(Y)]$ for every convex $f:\R\to\R$ for which both expectations are finite.

Let $t_0:=\sqrt{2\log 2}$ denote the point at which $s_G$ first descends from $1$, and hence define
\begin{equation}
\label{eq:Adef_Bdef}
A := \int_0^\infty s_G (t) \, \mathrm{d}t
= t_0 + 2 \int_{t_0}^\infty e^{-t^2/2} \, \mathrm{d}t,
\qquad
B := \frac{A}{2}.
\end{equation}
For $x\ge t_0$, set
\begin{equation}
\label{eq:Hdef}
H(x) := 2x \, e^{-x^2/2} + 2 \int_x^\infty e^{-t^2/2} \, \mathrm{d}t.
\end{equation}
Since $H$ is strictly decreasing with $H(t_0) = A$ and $\lim_{x\to\infty} H(x) = 0$, the intermediate value theorem supplies a unique $a > t_0$ such that $H(a) = B$. Define then
\begin{equation}
\label{eq:p0def}
p_0 := 2 e^{-a^2/2} \in (0,1),
\end{equation}
let $z$ be the unique solution of $\Phib(z) = p_0$, and set
\begin{equation}
\label{eq:c0def}
c_0 := \frac{B}{\phin(z)}.
\end{equation}

We now state the main result of this note.
\begin{theorem}[Sharp one-dimensional convex sub-Gaussian comparison]
\label{thm:main}
The sharp constant in \eqref{eq:def_cstar} satisfies $c_\star = c_0$, where $c_0$ is defined by \eqref{eq:Adef_Bdef}--\eqref{eq:c0def}. In particular:
\begin{enumerate}
\item For every random variable $X$ satisfying \eqref{eq:subg_tail} and every convex $f:\R\to\R$,
\begin{equation}
\label{eq:main_conv_dom}
\Ebb[f(X)] \le \Ebb[f(c_0G)]
\end{equation}
whenever the right-hand side is finite.
\item For every $c<c_0$, there exist a random variable $X^\star$ satisfying \eqref{eq:subg_tail} and a convex
function $f$ such that $\Ebb[f(X^\star)] > \Ebb[f(cG)]$.
One may take $f(x) = (x-cz)_+$, where $z$ is the parameter from \eqref{eq:c0def}.
\end{enumerate}
Consequently, the one-dimensional value of the constant $C_{48}$ in \cite{optimization-constants-repo} is $C_{48}^{(1)} = c_0^2$.
\end{theorem}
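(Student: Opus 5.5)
Since $\Ebb[X]=0=\Ebb[cG]$, the convex order $X\preceq_{cx} cG$ is equivalent to the stop-loss comparison $\pi_X(u):=\Ebb[(X-u)_+]\le \pi_{cG}(u)=c\,\bigl(\phin(u/c)-(u/c)\Phib(u/c)\bigr)$ for all $u\in\R$. Moreover, because $\pi_X(u)-\pi_X(-u)=-u$ and the same identity holds for $cG$, it suffices to treat $u\ge 0$; if needed, replacing $X$ by $-X$ preserves \eqref{eq:subg_tail}. So I would compute the extremal value $M(u):=\sup\{\pi_X(u): X \text{ satisfies } \eqref{eq:subg_tail}\}$, and then show $\sup_{u\ge0} M(u)$ matched against $\pi_{cG}(u)$ forces exactly $c\ge c_0$.

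\textbf{Step 1 (extremal stop-loss).} Write $\pi_X(u)=\int_u^\infty \Pbb(X>t)\,\mathrm{d}t$. The constraints are $\Pbb(X>t)+\Pbb(X<-t)\le s_G(t)$ and the mean-zero condition $\int_0^\infty \Pbb(X>t)\,\mathrm{d}t=\int_0^\infty\Pbb(X<-t)\,\mathrm{d}t$. To maximize the right tail beyond $u$, one wants to put all of the allowed tail mass on $(u,\infty)$ while balancing it with negative mass that sits as close to $0$ as possible. I would argue via a rearrangement or linear-programming bound that an optimizer has $\Pbb(X>t)=s_G(t)$ for $t\ge a$, has an atom-like flat region $\Pbb(X>t)=p_0$ on $[0,a)$ (the single flat region), and has its negative part saturating $s_G$ near $0$. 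The mean constraint then reads: positive contribution $a p_0+2\int_a^\infty e^{-t^2/2}\,\mathrm{d}t=H(a)$ equals the negative contribution, which is $A-H(a)$ when the negative part uses all remaining budget. This gives $H(a)=A/2=B$, explaining the definition of $a$. The resulting bound is $M(u)\le \int_u^\infty \min\{p_0,s_G(t)\}\,\mathrm{d}t$ for $u\le a$, with equality for the extremal law $X^\star$; for $u\ge a$ one gets $M(u)\le\int_u^\infty s_G$. The general upper bound should follow from two observations: $\pi_X(u)\le \int_u^\infty s_G$, and $\pi_X(u)\le \pi_X(0)-u\Pbb(X>u)$ combined with $\pi_X(0)=\tfrac12\Ebb|X|\le B$.

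\textbf{Step 2 (matching the Gaussian).} Here one needs $\pi_{cG}(u)\ge M(u)$ for all $u$. The binding point should be $u=cz$, where $\pi_{cG}$ has slope $-\Phib(z)=-p_0$, i.e. the point at which it is tangent to the line $B-p_0u$ that bounds $M$ on the flat region, since $M(u)\le B-p_0u$ there. Tangency gives $\pi_{cG}(cz)=B-p_0cz$, i.e. $c\phin(z)=B$, so $c=c_0$. For $c<c_0$, the value $\pi_{cG}(cz)=c\phin(z)-czp_0<B-czp_0$, and if $cz\le a$ this is at most $\pi_{X^\star}(cz)$. That gives part 2 with $f(x)=(x-cz)_+$. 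I would need to check that $c_0z\le a$ numerically and analytically.

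\textbf{Step 3 (sufficiency, the main obstacle).} For $c=c_0$ one must show $\pi_{c_0G}(u)\ge M(u)$ on all of $[0,\infty)$. By convexity of $\pi_{c_0G}$ it lies above its tangent $B-p_0u$, which handles the regime where $M(u)\le B-p_0u$. The hard part is the tail regime, where one needs $\pi_{c_0G}(u)\ge\int_u^\infty s_G$ for $u$ beyond where the tangent line stops dominating. Here I would compare derivatives: $c_0\Phib(u/c_0)$ against $2e^{-u^2/2}$, with both sides tending to $0$. Because $c_0>1$, the Gaussian tail is heavier, so it should suffice to verify the inequality at the crossover point and show that the difference has a single sign change. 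I expect this final verification to require careful monotonicity or log-concavity arguments, possibly with numerical certification of constants. The extension from stop-loss functions to general convex $f$ with finite right-hand side is then standard by approximation.
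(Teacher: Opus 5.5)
Your architecture matches the paper's: reduction to stop-loss transforms at $u\ge 0$ (using $-X$), the envelope equal to $B-p_0u$ on $[0,a]$ and $\int_u^\infty s_G$ beyond, the saturating law $X^\star$ with a flat region on $[0,a)$, tangency of $\pi_{c_0G}$ to the line $B-p_0u$ at $u=c_0z$, and a single-crossing argument for $u\ge a$.

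The genuine gap is in Step~1. The bound $\pi_X(u)\le B-p_0u$ for $0<u<a$ does \emph{not} follow from the two observations you list. With $\alpha:=\Pbb(X>u)$ they give only $\pi_X(u)\le\min\{B-\alpha u,\ \int_u^\infty s_G\}$. When $\alpha<p_0$, both terms strictly exceed $B-p_0u$; for the second this is because $\int_u^\infty s_G=B-ap_0+\int_u^a s_G$ and $s_G>p_0$ on $[u,a)$. The missing idea is to use monotonicity of $t\mapsto\Pbb(X>t)$ to cap the tail by $\alpha$: $\Pbb(X>t)\le\min\{\alpha,s_G(t)\}$ for $t\ge u$. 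If $0<\alpha<p_0$, pick $b>a$ with $s_G(b)=\alpha$. Then $\pi_X(u)\le\alpha(b-u)+\int_b^\infty s_G=H(b)-\alpha u$, and
\[
B-H(b)=H(a)-H(b)=ap_0-b\alpha+\int_a^b s_G\ge a(p_0-\alpha)\ge u(p_0-\alpha),
\]
so $H(b)-\alpha u\le B-p_0u$. The cases $\alpha\ge p_0$ and $\alpha=0$ are immediate. Your rearrangement/LP heuristic about the shape of an optimizer does not replace this trade-off argument.

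Step~3 is only sketched, and it contains a slip: $\frac{d}{du}\pi_{c_0G}(u)=-\Phib(u/c_0)$, with no factor $c_0$. More substantively, ``$c_0>1$, so the Gaussian tail is heavier'' only fixes the sign of $D'$ for large $u$. A single sign change on all of $[a,\infty)$ requires that $R(u)=\Phib(u/c_0)/(2e^{-u^2/2})$ be nondecreasing there. The paper obtains this from the Mills bound $\phin(x)/\Phib(x)\le x+1/x$, which gives $(\log R)'(u)\ge u(1-c_0^{-2})-1/u>0$ for $u\ge a$, after checking by elementary estimates that $a>\sqrt2$ and $c_0>\sqrt2$. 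Together with $D(a)\ge0$ (tangent line plus $\int_a^\infty s_G=B-ap_0$) and $D(u)\to0$, this closes the argument.

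Two smaller points:
\begin{itemize}
\item In Step~2 the check $c_0z\le a$ is unnecessary. Since $s_G\le p_0$ on $[a,\infty)$, you get $\pi_{X^\star}(u)=J_G(u)\ge B-p_0u$ for \emph{every} $u\ge0$; you only need $z>0$, i.e.\ $p_0<1/2$.
\item The identity $\pi_X(u)-\pi_X(-u)=-u$ holds only for symmetric $X$. For mean-zero $X$ the correct identity is $\pi_X(-v)=\pi_{-X}(v)+v$, which is what your ``replace $X$ by $-X$'' actually uses.
\end{itemize}
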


\begin{remark}[Numerical value]
A direct high-precision evaluation of \eqref{eq:Adef_Bdef}--\eqref{eq:c0def} gives
\[
a \approx 1.80334,\qquad
p_0 \approx 0.39342,\qquad
z \approx 0.27041,\qquad
c_0 \approx 2.30952,\qquad
c_0^2 \approx 5.33386.
\]
No numerical computation is used in the derivation of the exact characterization $c_\star=c_0$.
\end{remark}

\begin{remark}[Other notions of sub-Gaussianity]
    Note that a sub-Gaussian bound on the moment-generating function of $X$ of the form
    \begin{equation*}
        \mathbb{E} [ e^{\lambda X}] \leq e^{\lambda^2 / 2} \qquad \text{for all } \lambda \in \mathbb{R}
    \end{equation*}
    immediately implies the tail constraint with which we work. Whether sharper convex orderings can be found under this nominally stronger assumption is an interesting question, and seems likely to admit different extrema.
\end{remark}

Section~\ref{sec:psi1} records the analogous sharp constant under a two-sided sub-exponential tail bound, with convex domination by a scaled Laplace law. The case of general $d \geq 2$ remains open.

\section{Convex order and stop-loss transforms}

We recall the characterization of one-dimensional convex order in terms of stop-loss transforms. For $u\in\R$ define the hinge function $(x-u)_+ := \max \{ x - u, 0\}$.

\begin{proposition}[Stop-loss characterization of convex domination]
\label{prop:stoploss_to_convex}
Let $X,Y$ be integrable real random variables. Then $X \preceq_{cx} Y$ if and only if
\begin{equation}
\label{eq:stoploss_iff}
\Ebb[X] = \Ebb[Y]
\qquad\text{and}\qquad
\Ebb[(X-u)_+] \le \Ebb[(Y-u)_+]\quad\text{for all }u\in\R.
\end{equation}
\end{proposition}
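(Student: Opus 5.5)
The plan is to prove the two implications separately, using only elementary properties of convex functions together with the identity $\Ebb[(x-u)_+]$-representation of convex functions.

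\emph{Necessity.} Suppose $X\preceq_{cx}Y$. The functions $f(x)=x$ and $f(x)=-x$ are both convex, and both have finite expectations under $X$ and $Y$ by integrability. Applying the definition to each gives $\Ebb[X]\le\Ebb[Y]$ and $-\Ebb[X]\le-\Ebb[Y]$, hence $\Ebb[X]=\Ebb[Y]$. For fixed $u$, the hinge $f(x)=(x-u)_+$ is convex and satisfies $0\le f(x)\le |x|+|u|$, so both expectations are finite. The definition then yields the stop-loss inequality directly.

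\emph{Sufficiency.} Assume \eqref{eq:stoploss_iff}. First we derive the reflected inequality for left hinges. Since $(u-x)_+=(x-u)_+-(x-u)$ and the means agree, we get $\Ebb[(u-X)_+]=\Ebb[(X-u)_+]-\Ebb[X]+u\le \Ebb[(Y-u)_+]-\Ebb[Y]+u=\Ebb[(u-Y)_+]$.

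Now fix a convex $f$ with $\Ebb[f(X)]$ and $\Ebb[f(Y)]$ finite. We approximate $f$ by piecewise-linear convex functions. For $n\in\N$, let $f_n$ be the convex function that agrees with $f$ at the grid points $k/n$ for $|k|\le n^2$, is linear between them, and extends linearly beyond $\pm n$ with the outermost chord slopes. Such a function can be written as
\[
f_n(x)=\alpha_n+\beta_n x+\sum_{j}\gamma_{n,j}\,(x-u_{n,j})_+ ,
\]
a finite sum with all $\gamma_{n,j}\ge 0$. By the mean equality and the stop-loss inequality, $\Ebb[f_n(X)]\le\Ebb[f_n(Y)]$.

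Convexity of $f$ gives $f_n\to f$ pointwise as $n\to\infty$. To pass to the limit we use the following bounds:
\begin{itemize}
\item Chord interpolation gives $f_n\ge f$ on $[-n,n]$.
\item Outside $[-n,n]$, the linear extension lies below $f$. Hence $f_n\le f$ there, and also $f_n\ge \ell$ for a fixed supporting line $\ell$ of $f$ once $n$ is large.
\end{itemize}
Consequently $\ell\le f_n$ for all large $n$, and $\ell$ is integrable under both laws.

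This lower bound suggests applying Fatou's lemma to $f_n-\ell$ under $X$, which gives $\Ebb[f(X)]\le\liminf_n \Ebb[f_n(X)]$. For $Y$ we need an upper bound $\Ebb[f_n(Y)]\to\Ebb[f(Y)]$. On $[-n,n]$, however, the interpolant $f_n$ exceeds $f$, so a domination bound is needed there.

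This domination step is the main obstacle. The fix I would try first is to interpolate differently: replace the chords on $[-n,n]$ by the maximum of finitely many supporting lines of $f$ (tangent lines at grid points). That gives $\ell\le f_n\le f$ everywhere and $f_n\uparrow f$. Such a maximum is still convex and piecewise linear, so it is still a nonnegative combination of hinges plus an affine function. Monotone convergence, applied to $f_n-\ell$ under both laws, then passes the inequality $\Ebb[f_n(X)]\le\Ebb[f_n(Y)]$ to the limit. This gives $\Ebb[f(X)]\le\Ebb[f(Y)]$.
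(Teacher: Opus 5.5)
Your proof is correct, but it reaches sufficiency by a different route from the paper. The paper cites necessity, which you prove directly via $f(x)=\pm x$ and the hinges. For sufficiency it writes an arbitrary convex $f$ as $\alpha x+\beta+\int_{\R}(x-u)_+\,\mu(\mathrm{d}u)$ and applies Tonelli, so the proof is one line once that representation is granted. You instead approximate $f$ from below by finite maxima of supporting lines. Each of these is an affine function plus a finite nonnegative combination of right hinges, since the leftmost slope is absorbed into the affine part. You then pass to the limit by monotone convergence against a common affine minorant $\ell$.

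Your route is more robust. The one-sided representation displayed in the paper does not hold for every convex $f$: if it held, the right derivative of $f$ would be $\alpha+\mu((-\infty,x])\ge\alpha$, which fails for $f(x)=x^2$. Repairing it needs left hinges $(u-x)_+$ for $u\le 0$, together with exactly the reflected inequality $\Ebb[(u-X)_+]\le\Ebb[(u-Y)_+]$ that you derived. In your own argument, however, that reflected inequality and the abandoned chord-interpolation attempt are not needed and can be deleted.

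One small repair is needed. The grids $\{k/n:|k|\le n^2\}$ are not nested, so $f_n$ need not increase in $n$. Either use nested grids (dyadic ones, or the maximum over all grid points used up to stage $n$), or replace monotone convergence by dominated convergence. The bound $|f_n|\le|f|+|\ell|$ is integrable under both laws, because $\ell\le f$ and $\Ebb[f(X)]$, $\Ebb[f(Y)]$ are finite.
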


\begin{proof}
This is standard; see, e.g., \cite[Thm.~3.A.1]{ShakedShanthikumar}. For completeness, we recall the short direction needed below.
Assume \eqref{eq:stoploss_iff}.  Any convex $f : \R \to \R$ admits the representation
\begin{equation}
\label{eq:hinge_representation}
f(x) = \alpha x + \beta + \int_{\R} (x-u)_+ \, \mu(\mathrm{d}u),
\end{equation}
where $\alpha, \beta \in \R$ and $\mu$ is a nonnegative Borel measure on $\R$
\cite[Prop.~3.A.4]{ShakedShanthikumar}.
Integrability of $X,Y$ ensures that $\Ebb [ \alpha X + \beta ] = \Ebb [ \alpha  Y + \beta ]$. Tonelli's Theorem and \eqref{eq:stoploss_iff} yield that
\[
\Ebb[f(X)]
= \alpha \Ebb[X] + \beta + \int_{\R} \Ebb [ (X - u)_+ ] \, \mu (\mathrm{d}u)
\le \alpha\Ebb[Y]+\beta+\int_{\R}\Ebb[(Y-u)_+]\,\mu( \mathrm{d}u)
= \Ebb[f(Y)],
\]
whenever $\Ebb [ f(Y) ]<\infty$.  This is the desired convex domination.
\end{proof}

We next collect three elementary lemmas that will be used repeatedly in subsequent sections.

\begin{lemma}[Layer-cake for hinges]
\label{lem:layercake}
Let $X$ be integrable and let $u \in \R$. Then
\[
\Ebb [ (X - u)_+] = \int_u^\infty \Pbb (X > t) \, \mathrm{d}t.
\]
\end{lemma}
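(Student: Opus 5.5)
The plan is to reduce the claim to the pointwise identity $(x-u)_+ = \int_u^\infty \mathbf{1}\{x>t\}\,\mathrm{d}t$, valid for every real $x$, and then integrate against the law of $X$ using Tonelli's theorem.

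First we verify the identity. If $x \le u$, then $\mathbf{1}\{x>t\}=0$ for every $t\ge u$, so the integral is $0=(x-u)_+$. If $x>u$, the integrand equals $1$ exactly for $t\in[u,x)$, so the integral is $x-u=(x-u)_+$.

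Next we substitute $x=X(\omega)$ and take expectations. The integrand $(\omega,t)\mapsto \mathbf{1}\{X(\omega)>t\}$ is nonnegative and jointly measurable, since $\{(\omega,t): X(\omega)>t\}$ is a measurable subset of $\Omega\times\R$. Tonelli's theorem therefore lets us exchange the expectation and the $\mathrm{d}t$-integral without any integrability hypothesis:
\[
\Ebb[(X-u)_+]=\Ebb\Bigl[\int_u^\infty \mathbf{1}\{X>t\}\,\mathrm{d}t\Bigr]=\int_u^\infty \Pbb(X>t)\,\mathrm{d}t .
\]

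Integrability of $X$ is used only to make both sides finite. On the left, $0\le (X-u)_+\le |X|+|u|$. On the right, the finiteness then follows from the equality itself. The equality also holds in $[0,\infty]$ without integrability.

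No step here is a real obstacle. The only points needing care are the joint measurability required for Tonelli and checking the identity in the boundary case $x=u$, where both sides are $0$.
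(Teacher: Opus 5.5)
Your proof is correct and is essentially the paper's argument: the paper applies the layer-cake formula to the nonnegative variable $(X-u)_+$ and substitutes $t=u+s$, while you prove that formula inline via the pointwise identity $(x-u)_+=\int_u^\infty \mathbf{1}\{x>t\}\,\mathrm{d}t$ and Tonelli. Your observation that the equality holds in $[0,\infty]$ even without integrability is also accurate.
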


\begin{proof}
This is the layer-cake identity applied to the nonnegative random variable $(X - u)_+$, namely
\[
\Ebb[(X-u)_+] = \int_0^\infty \Pbb((X - u)_+ > s) \, \mathrm{d}s = \int_0^\infty \Pbb(X > u + s)\, \mathrm{d}s = \int_u^\infty \Pbb(X > t)\, \mathrm{d}t.
\]
\end{proof}

\begin{lemma}[Tangent line lower bound]\label{lem:tangent}
Let $I \subseteq \R$ be an interval and let $g : I \to \R$ be convex and differentiable.
Fix $B \in \R$ and $p_0 \in \R$. If there exists $u_\star \in I$ such that
$g(u_\star) = B - p_0 u_\star$ and $g^{\prime} (u_\star) = -p_0$, then
\[
g(u) \ge B - p_0 u \qquad \text{for all } u \in I.
\]
\end{lemma}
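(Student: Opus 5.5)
The plan is to use the supporting-line characterization of differentiable convex functions. Set $h(u) := g(u) - (B - p_0 u)$ for $u \in I$. Then $h$ is convex on $I$, because it is the convex function $g$ plus an affine function. It is also differentiable on $I$. The two hypotheses read $h(u_\star) = 0$ and $h'(u_\star) = g'(u_\star) + p_0 = 0$. The claim therefore reduces to showing that a differentiable convex function with a critical point attains its minimum there, i.e.\ $h(u) \ge h(u_\star) = 0$ for all $u \in I$.

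For that step, I would invoke the standard first-order inequality for differentiable convex functions on an interval:
\[
g(u) \ge g(u_\star) + g'(u_\star)(u - u_\star) \qquad \text{for all } u \in I.
\]
Substituting $g(u_\star) = B - p_0 u_\star$ and $g'(u_\star) = -p_0$ gives
\[
g(u) \ge B - p_0 u_\star - p_0(u - u_\star) = B - p_0 u,
\]
which is exactly the assertion.

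If a self-contained justification of the first-order inequality is wanted, I would argue via difference quotients. Fix $u \in I$ with $u \neq u_\star$. For $\lambda \in (0,1]$, convexity gives
\[
g\bigl(u_\star + \lambda(u - u_\star)\bigr) \le (1-\lambda)\, g(u_\star) + \lambda\, g(u).
\]
Rearranging yields
\[
\frac{g\bigl(u_\star + \lambda(u - u_\star)\bigr) - g(u_\star)}{\lambda} \le g(u) - g(u_\star).
\]
Letting $\lambda \downarrow 0$, the left side tends to $g'(u_\star)(u - u_\star)$ by differentiability. Here $u_\star + \lambda(u - u_\star)$ stays in $I$ because $I$ is an interval. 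If $u_\star$ is an endpoint of $I$, the derivative is understood one-sidedly, and this direction is the only one used, so the argument still applies.

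I do not expect a genuine obstacle. The only point requiring care is the endpoint case just described, where $g'(u_\star)$ must be read as a one-sided derivative.
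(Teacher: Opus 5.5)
Your proposal is correct and is the same argument as the paper's: apply the supporting-line inequality $g(u) \ge g(u_\star) + g'(u_\star)(u-u_\star)$ at $u_\star$ and substitute the two hypotheses. Your difference-quotient derivation of that inequality, including the one-sided derivative remark for endpoints, is sound but goes beyond what the paper includes.
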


\begin{proof}
By convexity, for all $u \in I$,
\[
g(u) \ge g(u_\star) + g^{\prime} (u_\star) (u - u_\star) = B - p_0u.
\]
\end{proof}

The following monotone-ratio principle will allow us to deduce $D\ge 0$ from the sign pattern of $D^{\prime}$.

\begin{lemma}[Monotone-ratio principle]
\label{lem:monotone_ratio_principle}
Let $D : [a, \infty) \to \R$ be differentiable with $\lim_{u\to\infty} D(u) = 0$.
Assume $D^{\prime}(u) = w(u) \bigl(1 - R(u) \bigr)$ for $u \ge a$, where $w(u) > 0$ and $R$ is nondecreasing.
If $D(a) \ge 0$, then $D(u) \ge 0$ for all $u \ge a$.
\end{lemma}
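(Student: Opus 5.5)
We argue by contradiction. Suppose $D(u_1)<0$ for some $u_1>a$. Since $D(a)\ge 0$ and $D(u)\to 0$ as $u\to\infty$, the function $D$ must first decrease somewhere on $[a,u_1]$ and later increase somewhere on $[u_1,\infty)$. The sign pattern of $D'$ forced by the factorization $D'=w(1-R)$ rules this out.

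The first step is to record the sign structure of $D'$. Because $w>0$, the sign of $D'(u)$ is the sign of $1-R(u)$. Because $R$ is nondecreasing, the set $\{u\ge a: R(u)\le 1\}$ is an initial segment of $[a,\infty)$. Hence there is $u_\ast\in[a,\infty]$ such that $D'\ge 0$ on $[a,u_\ast)$ and $D'\le 0$ on $(u_\ast,\infty)$; the endpoint $u_\ast$ itself can be assigned either way. In words, $D$ is nondecreasing and then nonincreasing, so it is unimodal. This uses only the mean value theorem applied to a differentiable $D$ on each piece.

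The second step splits into two cases according to where $u$ lies.
\begin{itemize}
\item If $u\le u_\ast$, then $D$ is nondecreasing on $[a,u]$, so $D(u)\ge D(a)\ge 0$.
\item If $u\ge u_\ast$ (which requires $u_\ast<\infty$), then $D$ is nonincreasing on $[u,\infty)$, so $D(u)\ge \lim_{v\to\infty}D(v)=0$.
\end{itemize}
The two cases together cover every $u\ge a$, which gives $D\ge 0$ on $[a,\infty)$.

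The only delicate point is defining $u_\ast$ carefully when $R$ may jump across the level $1$ or touch it on an interval. Set $u_\ast:=\sup\{u\ge a: R(u)<1\}$, with the conventions $\sup\emptyset=a$ and possibly $u_\ast=\infty$. Monotonicity of $R$ then gives $R<1$ on $[a,u_\ast)$ and $R\ge1$ on $(u_\ast,\infty)$. This yields $D'\ge0$ on $[a,u_\ast)$ and $D'\le0$ on $(u_\ast,\infty)$. Monotonicity of $D$ extends to the closed intervals by continuity of $D$. No regularity of $R$ or $w$ beyond what is stated is needed.
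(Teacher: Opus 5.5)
Your proof is correct and uses the same mechanism as the paper's. Since $w>0$ and $R$ is nondecreasing, $D'$ changes sign at most once, from nonnegative to nonpositive, so $D$ is unimodal and hence bounded below by $\min\{D(a), \lim_{u\to\infty}D(u)\}\ge 0$.

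The paper instead argues by contradiction at a global minimizer $u_0$, where $D'(u_0)=0$ forces $R(u_0)=1$. That fixes the crossing point and avoids your $\sup$-construction for handling jumps of $R$. You argue directly, so your opening ``by contradiction'' sentence is vestigial but harmless.
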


\begin{proof}
If $D$ attains a negative value, let $u_0$ be a point where $D$ achieves its minimum on $[a,\infty)$.
Then $D(u_0) < 0$ and necessarily $u_0 > a$ and $D^{\prime}(u_0) = 0$.
Since $w(u_0) > 0$, we have $R(u_0) = 1$.
By monotonicity of $R$, $R(u) \le 1$ for $u \le u_0$ and $R(u) \ge 1$ for $u\ge u_0$. 
As such, $D^{\prime}(u) \ge 0$ for $u\le u_0$ and $D^{\prime}(u) \le 0$ for $u\ge u_0$, so that $u_0$ is instead a global maximum of $D$ on $[a, \infty)$, contradicting the assumption that $D(u_0) < 0$ and $\lim_{u\to\infty}D(u)=0$.
\end{proof}

\section{A sharp stop-loss envelope under the two-sided tail constraint}

Fix a deterministic function \(s : [0, \infty) \to [0, 1]\). We interpret \(s\) as a two-sided tail envelope: an integrable random variable \(X\) `satisfies the tail constraint \(s\)' if
\[
\Pbb (|X|>t) \le s(t) \qquad \text{for all } t \ge 0.
\]
Lemma~\ref{lem:envelope} gives a sharp upper envelope \(J_s\) for the stop-loss transform \(u \mapsto \Ebb[(X-u)_+]\) over all mean-zero \(X\) obeying this constraint. The sub-Gaussian and sub-exponential comparisons later are obtained by specializing \(s\) to \(s_G : t \mapsto \min\{1, 2e^{-t^2/2}\}\) and \(s_E : t\mapsto \min \{1, 2e^{-t} \} \), respectively.

\begin{lemma}[Sharp stop-loss envelope]\label{lem:envelope}
Let $s:[0,\infty)\to[0,1]$ be non-increasing and continuous, and assume
\[
A_s: = \int_0^\infty s(t)\, \mathrm{d}t < \infty,
\qquad
B_s := \frac{A_s}{2}.
\]
Define
\[
H_s(x) := x \, s(x) + \int_x^\infty s(t) \, \mathrm{d}t.
\]
Assume there exists $a>0$ such that $H_s(a) = B_s$, and set $p_0: = s(a)$. Define $J_s: [0, \infty) \to \R_+$ by
\[
J_s(u):=
\begin{cases}
B_s - p_0 u, & 0 \le u \le a,\\[2mm]
\int_u^\infty s(t) \, \mathrm{d}t, & u \ge a.
\end{cases}
\]
If $X$ is integrable with $\Ebb[X] = 0$ and satisfies the tail constraint
\[
\Pbb \bigl( |X| > t \bigr) \le s(t) \qquad \text{for all } t \ge 0,
\]
then for every $u \ge 0$, there holds the stop-loss bound
\[
\Ebb[ (X - u)_+ ] \le J_s (u).
\]
\end{lemma}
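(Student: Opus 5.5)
We bound $\Ebb[(X-u)_+]$ by two separate arguments, one for each of the two regimes in the definition of $J_s$, and then glue them together using the convexity of the stop-loss transform.

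\emph{The regime $u\ge a$.} Here we use Lemma~\ref{lem:layercake} directly. It gives $\Ebb[(X-u)_+]=\int_u^\infty \Pbb(X>t)\,\mathrm{d}t$. For $t\ge u\ge 0$ we have $\Pbb(X>t)\le \Pbb(|X|>t)\le s(t)$, so $\Ebb[(X-u)_+]\le\int_u^\infty s(t)\,\mathrm{d}t = J_s(u)$. Note that this bound is in fact valid for every $u\ge0$, not only for $u\ge a$.

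\emph{The regime $0\le u\le a$.} Set $g(u):=\Ebb[(X-u)_+]$. This function is convex, nonincreasing, and has right derivative $-\Pbb(X>u)$. Since $J_s$ is affine on $[0,a]$, it suffices to check the two endpoints $u=0$ and $u=a$; convexity of $g$ then bounds $g$ by the chord, which is exactly $J_s$ on $[0,a]$.

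At $u=a$ the value $J_s(a)$ is consistent across the two pieces. Indeed,
\[
B_s - p_0 a = H_s(a) - a\,s(a) = \int_a^\infty s(t)\,\mathrm{d}t,
\]
so the previous step already gives $g(a)\le J_s(a)$.

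At $u=0$, the mean-zero hypothesis gives $\Ebb[X_+]=\Ebb[X_-]=\tfrac12\Ebb|X|$. By the layer-cake identity and the tail constraint,
\[
\Ebb|X|=\int_0^\infty \Pbb(|X|>t)\,\mathrm{d}t\le A_s .
\]
Hence $g(0)=\Ebb[X_+]\le A_s/2=B_s=J_s(0)$.

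Combining the two endpoint bounds, for $u=\lambda\cdot 0+(1-\lambda)a$ convexity yields
\[
g(u)\le \lambda g(0)+(1-\lambda)g(a)\le \lambda B_s+(1-\lambda)(B_s-p_0a)=B_s-p_0u .
\]

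\emph{Main point to verify.} The only subtlety is the endpoint identity $B_s-p_0a=\int_a^\infty s$. This is exactly what the defining equation $H_s(a)=B_s$ encodes, and the chord argument then needs no derivative information. The tangency ($J_s$ being $C^1$ at $a$) and Lemma~\ref{lem:tangent} are only needed for sharpness, not for this upper bound.

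Integrability of $X$ guarantees that $g$ is finite and convex, so the chord argument is justified. Continuity and monotonicity of $s$ are used only to make $J_s$ well defined; they are not needed for the inequality itself.
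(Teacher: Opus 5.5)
Your proof is correct, and it takes a genuinely different and shorter route than the paper's.

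\textbf{The paper's route.} The paper fixes $u<a$ and argues pointwise on the tail function $p(t)=\Pbb(X>t)$, setting $\alpha:=p(u)$.
\begin{itemize}
\item If $\alpha\ge p_0$, it uses the mass bound $\int_u^\infty p\le B_s-\alpha u$.
\item If $0<\alpha<p_0$, it uses the cap bound $\int_u^\infty p\le\int_u^\infty\min\{\alpha,s\}$. It then introduces an auxiliary level point $b>a$ with $s(b)=\alpha$ and compares $H_s(b)$ with $H_s(a)$ using $\int_a^b s\ge\alpha(b-a)$.
\end{itemize}

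\textbf{Your route.} You use convexity of $u\mapsto\Ebb[(X-u)_+]$ together with just two endpoint bounds: $g(0)\le B_s$ and $g(a)\le\int_a^\infty s=B_s-p_0a$. The key observation is that the chord through these two points has slope exactly $-p_0$, which is what $H_s(a)=B_s$ encodes. So the chord coincides with $J_s$ on $[0,a]$.

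\textbf{What your argument buys.} It removes the case split and the auxiliary point $b$. Finding $b$ requires continuity of $s$ and $s(t)\to0$, and the comparison of $H_s(b)$ with $H_s(a)$ requires monotonicity of $s$. So, as you say, your argument gives the inequality for any measurable $s\ge0$ with $A_s<\infty$ and any $a>0$ with $H_s(a)=B_s$.

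\textbf{What the paper's argument buys.} It is in effect solving the problem ``maximize $\int_u^\infty p$ over nonincreasing $p\le s$ with $\int_0^\infty p\le B_s$.'' Its inequalities are all tight exactly when $p=\min\{p_0,s\}$. That is the flat-then-saturating tail of the extremizer $X^\star$ in Lemma~\ref{lem:extremizer}, so the paper's proof makes the extremal shape visible. Your chord bound reaches the same envelope, but sharpness must then come separately from that lemma.

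\textbf{A small correction to your closing remark.} In the paper, Lemma~\ref{lem:tangent} is used for the domination step $g_{c_0}\ge J_G$ in Lemma~\ref{lem:gauss_ge_J}, not for sharpness of the envelope.
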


\begin{proof}
Let $p(t) := \Pbb(X > t)$. Then, by assumption, $p$ is nonincreasing and $p(t) \le s(t)$ for all $t \ge 0$. By Lemma~\ref{lem:layercake}, we compute
\[
\int_0^\infty p(t)\, \mathrm{d}t=\Ebb[X_+].
\]
Since $\Ebb[X]=0$, it holds that $\Ebb|X| = 2\,\Ebb[X_+]$. Another application of Lemma~\ref{lem:layercake} yields the bound
\[
\Ebb|X| = \int_0^\infty \Pbb(|X| > t)\, \mathrm{d}t \le \int_0^\infty s(t) \, \mathrm{d}t = A_s,
\]
and hence $\int_0^\infty p(t) \, \mathrm{d}t\le A_s/2 = B_s$.
Fix now $u\ge 0$ and set $\alpha := p(u)$. Since $p$ is nonincreasing, we can make the elementary bound
\[
\int_0^u p(t)\, \mathrm{d}t\ge \alpha u,
\qquad\text{hence}\qquad
\int_u^\infty p(t)\, \mathrm{d}t\le B_s-\alpha u.
\tag{M}\label{eq:mass_bound}
\]
Separately, since $p(t)\le \alpha$ for $t\ge u$, we can equally bound
\[
\int_u^\infty p(t) \, \mathrm{d}t \le \int_u^\infty \min \{\alpha, s(t)\}\, \mathrm{d}t.
\tag{C}\label{eq:cap_bound}
\]
If $u \ge a$, then \eqref{eq:cap_bound} gives immediately that $\int_u^\infty p(t) \, \mathrm{d}t \le \int_u^\infty s(t)\, \mathrm{d}t = J_s(u)$.

Assume then that $u<a$. If $\alpha\ge p_0$, then \eqref{eq:mass_bound} yields that
\[
\int_u^\infty p(t) \, \mathrm{d}t \le B_s - \alpha u \le B_s-p_0 u = J_s (u).
\]

Finally, assume that $u<a$ and $\alpha < p_0$.
If $\alpha = 0$, then $p(t) = 0$ for all $t \ge u$, so $\int_u^\infty p(t)\,\mathrm{d}t = 0 \le J_s(u)$ and there is nothing to prove.
Assume therefore that $\alpha > 0$. By continuity and monotonicity of $s$ and $\lim_{t\to\infty} s(t) = 0$, there exists $b > a$ such that $s(b) = \alpha$.
\eqref{eq:cap_bound} then gives that
\[
\int_u^\infty p(t)\, \mathrm{d}t\le \alpha(b-u)+\int_b^\infty s(t)\, \mathrm{d}t = H_s(b)-\alpha u.
\]
Since $s$ is nonincreasing and $s(b) = \alpha$, we have the elementary bound $\int_a^b s(t)\, \mathrm{d}t\ge \alpha(b-a)$, and can hence deduce that
\begin{align*}
B_s - H_s(b)
&=H_s(a) - H_s(b)
= a p_0 - b \alpha + \int_a^b s(t) \, \mathrm{d}t\\
&\ge a p_0 - b \alpha + \alpha (b-a)
= a(p_0 - \alpha)
\ge u (p_0 - \alpha),
\end{align*}
using $u<a$ in the last step. Rearrangement then gives that $H_s(b) - \alpha u \le B_s - p_0 u = J_s (u)$, and we conclude.
\end{proof}

\begin{lemma}[A global linear lower bound for $J_s$]\label{lem:J_ge_line}
Work in the setting of Lemma~\ref{lem:envelope}. Then
\[
J_s (u) \ge B_s - p_0 u \qquad \text{for all } u \ge 0.
\]
\end{lemma}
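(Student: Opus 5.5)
On $[0,a]$ the claim holds with equality, directly from the definition of $J_s$. So the only work is on $[a,\infty)$, where I must show
\[
g(u) := \int_u^\infty s(t)\,\mathrm{d}t \ \ge\ B_s - p_0 u .
\]
I plan to use Lemma~\ref{lem:tangent} with $I=[a,\infty)$ and $u_\star = a$.

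First, $g$ is convex on $[0,\infty)$. It is differentiable because $s$ is continuous, and $g'(u) = -s(u)$ is nondecreasing because $s$ is nonincreasing.

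Second, I check the two tangency conditions at $u_\star=a$. The derivative condition is $g'(a) = -s(a) = -p_0$, which holds by the definition of $p_0$. The value condition follows from $H_s(a)=B_s$:
\[
g(a) = H_s(a) - a\,s(a) = B_s - p_0 a .
\]
Lemma~\ref{lem:tangent} then gives $g(u)\ge B_s - p_0 u$ for all $u\ge a$, and together with the equality on $[0,a]$ this proves the lemma.

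The argument could equally be run on all of $[0,\infty)$, since $g$ is convex there and $J_s\ge$ the linear part on $[0,a]$ trivially. The only delicate point is justifying differentiability of $g$ at the endpoint $a$ of the interval. If needed, I would avoid this by a direct computation: for $u\ge a$,
\[
g(u) - (B_s - p_0 u) = \int_a^u \bigl(p_0 - s(t)\bigr)\,\mathrm{d}t \ \ge\ 0,
\]
since $s(t)\le s(a)=p_0$ for $t\ge a$. This one-line argument is a fully rigorous fallback, so I do not expect any real obstacle.
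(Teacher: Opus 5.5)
Your proof is correct and is essentially the paper's: the paper argues exactly as in your fallback, writing $J_s(u) = J_s(a) - \int_a^u s(t)\,\mathrm{d}t$ and using $s(t)\le s(a)=p_0$ for $t\ge a$. Your primary route via Lemma~\ref{lem:tangent} repackages the same fact, since convexity of $u\mapsto\int_u^\infty s(t)\,\mathrm{d}t$ is just monotonicity of $s$. The endpoint worry is moot: $a>0$, so this function is differentiable at $a$ as a function on $(0,\infty)$.
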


\begin{proof}
By definition, $J_s (u) = B_s - p_0 u$ for $u \in [0, a]$. For $u \ge a$, we can write $J_s (u) = \int_u^\infty s(t)\, \mathrm{d}t$ and $J_s (a) = B_s - p_0 a$. Since $s$ is nonincreasing, we can decompose
\[
J_s (u) = J_s (a) - \int_a^u s(t) \, \mathrm{d}t \ge B_s -p_0 a - p_0 (u - a) = B_s - p_0 u.
\]
\end{proof}

\begin{lemma}[Extremizer attaining $J_s$]\label{lem:extremizer}
Work in the setting of Lemma~\ref{lem:envelope}, and assume in addition that there exists $t_0 \ge 0$ such that $s(t) = 1$ for $t \in [0, t_0]$ and $s$ is strictly decreasing on $[t_0, \infty)$. 
Assume also that the solution $a$ to $H_s (a) = B_s$ satisfies $a > t_0$.
Let $X^\star$ be the random variable with distribution function
\[
F_\star(x):=
\begin{cases}
0,& x \le -a,\\[1mm]
s (-x) - p_0, & -a < x \le -t_0,\\[1mm]
1 - p_0, & -t_0 < x < a,\\[1mm]
1 - s (x), & x \ge a.
\end{cases}
\]
The function $F_\star$ is a cumulative distribution function on $\R$.
Then $\Pbb( |X^\star| > t) = s (t)$ for all $t \ge 0$ and $\Ebb [X^\star] = 0$. Moreover, for every $u\ge 0$, the stop-loss satisfies
\[
\Ebb [ (X^\star - u)_+] = J_s (u).
\]
\end{lemma}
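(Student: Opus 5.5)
The plan is to verify the three claims in order: $F_\star$ is a CDF, the two tail identities hold, and the stop-loss formula holds. Each reduces to direct computation from the piecewise definition, with the defining equation $H_s(a)=B_s$ used only for the mean.

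\emph{CDF property.} Each piece is nondecreasing. For $-a<x\le -t_0$ the argument $-x$ decreases as $x$ increases, and $s$ is nonincreasing, so $s(-x)-p_0$ is nondecreasing. Its left limit at $-a$ is $s(a)-p_0=0$, which matches $F_\star(-a)=0$. At $-t_0$ it equals $s(t_0)-p_0=1-p_0$, which matches the middle piece. At $x=a$ we get $1-s(a)=1-p_0$, and the last piece is nondecreasing and tends to $1$, because $A_s<\infty$ together with monotonicity forces $s(t)\to0$. Right-continuity follows from continuity of $s$. So $F_\star$ is a continuous CDF whose mass on $(-t_0,a)$ is zero.

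\emph{Tails.} For $t\ge a$ we have $\Pbb(X^\star>t)=s(t)$ and $\Pbb(X^\star<-t)=0$. For $t_0\le t<a$ we have $\Pbb(X^\star>t)=p_0$ and $\Pbb(X^\star<-t)=F_\star(-t)=s(t)-p_0$ by continuity, so the sum is $s(t)$. For $0\le t<t_0$ we have $\Pbb(X^\star>t)=p_0$ and $\Pbb(X^\star<-t)=1-p_0$, so the sum is $1=s(t)$. Hence $\Pbb(|X^\star|>t)=s(t)$ for all $t\ge0$.

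\emph{Mean.} By Lemma~\ref{lem:layercake} with $u=0$,
\[
\Ebb[X^\star_+]=\int_0^\infty \Pbb(X^\star>t)\,\mathrm{d}t=p_0a+\int_a^\infty s(t)\,\mathrm{d}t=H_s(a)=B_s .
\]
Summing the two tails and integrating gives $\Ebb|X^\star|=A_s$, so $\Ebb[X^\star_-]=A_s-B_s=B_s$ and therefore $\Ebb[X^\star]=0$. This is the step where the defining equation for $a$ is essential.

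\emph{Stop-loss.} By Lemma~\ref{lem:layercake} again, $\Ebb[(X^\star-u)_+]=\int_u^\infty \Pbb(X^\star>t)\,\mathrm{d}t$, where $\Pbb(X^\star>t)=p_0$ on $[0,a)$ and equals $s(t)$ on $[a,\infty)$.
\begin{itemize}
\item For $u\ge a$ the integral is $\int_u^\infty s(t)\,\mathrm{d}t=J_s(u)$.
\item For $0\le u\le a$ it is $p_0(a-u)+\int_a^\infty s(t)\,\mathrm{d}t=H_s(a)-p_0u=B_s-p_0u=J_s(u)$.
\end{itemize}

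There is no real obstacle here. The only care points are the boundary matching at $-a$, $-t_0$ and $a$, which uses $s(a)=p_0$ and $s(t_0)=1$, and keeping track of strict versus non-strict inequalities in the tails, which is harmless because $F_\star$ is continuous.
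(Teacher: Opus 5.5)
Your proof is correct and follows essentially the same route as the paper: a case-by-case check of the two tails, then the layer-cake identities $\Ebb[X^\star_+]=H_s(a)=B_s$ and $\Ebb|X^\star|=A_s$ to get mean zero, then the layer-cake evaluation of the stop-loss separately on $[0,a]$ and $[a,\infty)$. The one addition is your explicit check that $F_\star$ is a continuous CDF (monotonicity, boundary matching, and $s(t)\to 0$ from $A_s<\infty$), which the paper asserts without proof; the boundary limit at $-a$ is taken as $x\downarrow -a$ (a right limit, not a left one), though this makes no difference since $s$ is continuous.
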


\begin{proof}
We first verify the two-sided tail by considering cases. If $0 \le t< t_0$, then $X^\star \notin(-t_0, a)$ almost surely, so
$\Pbb(|X^\star| > t) = 1 = s (t)$. If $t_0 \le t <a$, then $\Pbb(X^\star > t) = p_0$ and
\[
\Pbb(X^\star<-t) = F_\star (-t) = s (t) - p_0,
\]
and so $\Pbb( |X^\star| > t) = s (t)$. If $t \ge a$, then $\Pbb(X^\star > t) = s(t)$ and $\Pbb(X^\star < -t) = 0$, so again $\Pbb( |X^\star| > t) = s(t)$.

By Lemma~\ref{lem:layercake} and the identity $\Pbb( |X^\star| > t) = s (t)$, compute that
\[
\Ebb |X^\star| = \int_0^\infty \Pbb( |X^\star| > t) \, \mathrm{d}t = \int_0^\infty s(t) \, \mathrm{d}t = A_s,
\]
and also that
\[
\Ebb [X^\star_+] = \int_0^\infty \Pbb(X^\star > t) \, \mathrm{d}t
= \int_0^a p_0 \, \mathrm{d}t + \int_a^\infty s(t) \, \mathrm{d}t
= a p_0 + \int_a^\infty s(t) \, \mathrm{d}t
= H_s(a) = B_s.
\]
We thus see that (writing $X^\star_- = \max (-X^\star, 0)$) $\Ebb[X^\star_-]=\Ebb|X^\star|-\Ebb[X^\star_+] = A_s - B_s = B_s$, and hence that $\Ebb [X^\star] = 0$.

Finally, for $u \ge 0$, Lemma~\ref{lem:layercake} gives that
\[
\Ebb [(X^\star - u)_+] = \int_u^\infty \Pbb(X^\star > t) \, \mathrm{d}t.
\]
If $u < a$, then this equals $p_0 (a - u) + \int_a^\infty s(t) \, \mathrm{d}t = B_s - p_0 u = J_s (u)$, whereas if $u\ge a$, then it equals $\int_u^\infty s(t) \, \mathrm{d}t = J_s (u)$, i.e. equality holds throughout.
\end{proof}

\begin{proposition}[From envelope domination to convex domination]\label{prop:envelope_to_cx}
Work in the setting of Lemma~\ref{lem:envelope} and write $J_s$ for the corresponding envelope.
Let $X$ be integrable with $\Ebb [X] = 0$ and satisfy the tail constraint $\Pbb( |X| > t) \le s(t)$ for all $t\ge 0$.
Let $Y$ be integrable and symmetric about $0$, and set $g_Y (u) := \Ebb[(Y - u)_+]$.
If
\[
g_Y(u) \ge J_s(u) \qquad\text{for all } u\ge 0,
\]
then $X \preceq_{cx} Y$.
\end{proposition}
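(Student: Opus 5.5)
We will verify the stop-loss criterion of Proposition~\ref{prop:stoploss_to_convex}. Both $X$ and $Y$ are integrable with $\Ebb[X]=0=\Ebb[Y]$, the latter by symmetry of $Y$. It therefore suffices to show that $\Ebb[(X-u)_+]\le \Ebb[(Y-u)_+]$ for every $u\in\R$. We split into the cases $u\ge 0$ and $u<0$.

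For $u\ge 0$ this is immediate. Lemma~\ref{lem:envelope} gives $\Ebb[(X-u)_+]\le J_s(u)$, and the hypothesis gives $J_s(u)\le g_Y(u)$.

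For $u<0$ we use the identity $(x-u)_+ = (x-u) + (u-x)_+ = (x-u) + (-x-(-u))_+$. Taking expectations and using the zero means,
\[
\Ebb[(X-u)_+] = -u + \Ebb[(-X - v)_+],\qquad \Ebb[(Y-u)_+] = -u + \Ebb[(-Y-v)_+],
\]
where $v:=-u>0$. Now $-X$ is also integrable, has mean zero, and satisfies the same two-sided tail constraint, since $|{-X}|=|X|$. Lemma~\ref{lem:envelope} applied to $-X$ therefore gives $\Ebb[(-X-v)_+]\le J_s(v)$. By symmetry of $Y$, $-Y$ has the same law as $Y$, so $\Ebb[(-Y-v)_+] = g_Y(v)\ge J_s(v)$. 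Combining these,
\[
\Ebb[(X-u)_+]\le -u+J_s(v)\le -u+g_Y(v)=\Ebb[(Y-u)_+].
\]

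This completes the stop-loss comparison on all of $\R$, and Proposition~\ref{prop:stoploss_to_convex} yields $X\preceq_{cx}Y$.

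The only step with any content is the reflection for negative $u$. It works because both the tail constraint and the mean-zero condition are invariant under $X\mapsto -X$, and $Y$ is symmetric; once that is seen, nothing else is needed. The requirement in Proposition~\ref{prop:stoploss_to_convex} that $\Ebb[f(Y)]$ be finite is inherited directly, so no additional integrability check is required.
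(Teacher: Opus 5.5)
Your proposal is correct and follows essentially the same argument as the paper. The case $u\ge 0$ comes directly from Lemma~\ref{lem:envelope} together with the hypothesis. For $u<0$ you use the reflection identity $(x+v)_+=(-x-v)_++x+v$, apply Lemma~\ref{lem:envelope} to $-X$, and use the symmetry of $Y$, before concluding via Proposition~\ref{prop:stoploss_to_convex}.
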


\begin{proof}
For $u \ge 0$, Lemma~\ref{lem:envelope} gives $\Ebb [(X - u)_+] \le J_s(u)\le g_Y(u) = \Ebb[(Y - u)_+]$.
Applying Lemma~\ref{lem:envelope} to $-X$ (which also has $\Ebb [ -X ] = 0$ and satisfies the same tail constraint as $X$) yields
$\Ebb[(- X - u)_+] \le J_s (u) \le g_Y (u) =\Ebb[(- Y - u)_+ ]$ for all $u \ge 0$, using symmetry of $Y$.
Fix $u \in \R$. If $u \ge 0$ then the preceding display gives $g_X (u) \le g_Y (u)$.
If $u < 0$, then set $v := - u > 0$ and use the identity
\[
(x + v)_+ = (- x - v)_+ + x + v
\]
to write
\[
g_X(-v) = \Ebb [(X + v)_+] = \Ebb [(- X - v)_+] + v = g_{-X}(v) + v,
\qquad
g_Y (-v) = g_{-Y} (v) + v,
\]
since $\Ebb [X] = \Ebb [Y] = 0$. The bound for $-X$ yields $g_{-X} (v) \le g_{-Y} (v)$ for all $v\ge 0$, and hence $g_X(u) \le g_Y(u)$ for all $u \in \R$. Applying Proposition~\ref{prop:stoploss_to_convex} then completes the proof.
\end{proof}

\section{Gaussian domination and proof of Theorem~\ref{thm:main}}

By Proposition~\ref{prop:envelope_to_cx}, Theorem~\ref{thm:main} follows once we show that $g_{c_0}(u) \ge J_G (u)$ for all $u \ge 0$, where $J_G$ is the stop-loss envelope from Lemma~\ref{lem:envelope} for the sub-Gaussian tail envelope $s_G : t\mapsto \min\{1, 2e^{-t^2/2}\}$. This section proves this inequality and identifies the sharp $c_0$.

For $c>0$ define the Gaussian stop-loss transform
\begin{equation}
\label{eq:gcdef}
g_c(u) := \Ebb[(c G - u)_+], \qquad u \in\R.
\end{equation}
We first recall an exact formula for $g_c$.

\begin{lemma}[Gaussian stop-loss formula]
\label{lem:gaussian_call}
For $c > 0$ and $u \in \R$,
\begin{equation}
\label{eq:gaussian_call_formula}
g_c(u) =c \, \phin (u/c) - u \, \Phib(u/c).
\end{equation}
In particular, $g_c$ is convex, differentiable, and satisfies
\begin{equation}
\label{eq:gc_derivative}
g_c^{\prime}(u)=-\Phib(u/c).
\end{equation}
\end{lemma}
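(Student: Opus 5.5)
We reduce to the case $c=1$ by scaling and then compute directly with the Gaussian density. Since $cG$ has the law of $c$ times a standard normal, we have $g_c(u)=\Ebb[(cG-u)_+]=c\,\Ebb[(G-u/c)_+]=c\,g_1(u/c)$. It therefore suffices to show
\[
g_1(v)=\phin(v)-v\,\Phib(v), \qquad v\in\R .
\]
Substituting $v=u/c$ and multiplying by $c$ then gives \eqref{eq:gaussian_call_formula}.

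For the identity at $c=1$, write
\[
\Ebb[(G-v)_+]=\int_v^\infty (x-v)\phin(x)\,\mathrm{d}x=\int_v^\infty x\phin(x)\,\mathrm{d}x - v\,\Phib(v).
\]
Since $\phin'(x)=-x\phin(x)$, the first integral equals $\bigl[-\phin(x)\bigr]_v^\infty=\phin(v)$. All integrals converge absolutely because of Gaussian decay. This proves the formula.

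For the derivative, there are two routes. One is to differentiate the closed form. With $v=u/c$,
\[
\frac{\mathrm{d}}{\mathrm{d}u}\bigl[c\,\phin(v)\bigr]=\phin'(v)=-v\phin(v),
\qquad
\frac{\mathrm{d}}{\mathrm{d}u}\bigl[-u\Phib(v)\bigr]=-\Phib(v)+\frac{u}{c}\phin(v)=-\Phib(v)+v\phin(v).
\]
The $v\phin(v)$ terms cancel, leaving $g_c'(u)=-\Phib(u/c)$. The other route is the layer-cake identity (Lemma~\ref{lem:layercake}): it gives $g_c(u)=\int_u^\infty \Pbb(cG>t)\,\mathrm{d}t=\int_u^\infty\Phib(t/c)\,\mathrm{d}t$, and the fundamental theorem of calculus applies because the integrand is continuous.

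Convexity follows from the derivative formula. The map $u\mapsto-\Phib(u/c)$ is nondecreasing, and indeed strictly increasing, so $g_c$ is convex. Alternatively, $u\mapsto (cG-u)_+$ is convex for each realization, and expectation preserves convexity.

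No step is a real obstacle. The only points needing care are the cancellation in the derivative, which uses $\phin'=-x\phin$, and the justification of the integrals via Gaussian tails.
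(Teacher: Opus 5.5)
Your proof is correct and is essentially the paper's argument. The paper starts from the layer-cake representation $g_c(u)=\int_u^\infty \Phib(t/c)\,\mathrm{d}t$ (your second route), reads off $g_c'(u)=-\Phib(u/c)$ by the fundamental theorem of calculus, and obtains the closed form by integrating by parts; that is the same computation as your direct integration against the density using $\phin'(x)=-x\phin(x)$, with the scaling reduction to $c=1$ and the explicit cancellation check being harmless extras.
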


\begin{proof}
Let $Z := c G$.  By Lemma~\ref{lem:layercake},
\[
g_c(u) = \int_u^\infty \Pbb(Z>t)\, \mathrm{d}t = \int_u^\infty \Phib(t/c)\, \mathrm{d}t.
\]
Differentiate to obtain $g_c^{\prime} (u)=-\Phib(u/c)$.
Integrating by parts in the last display gives \eqref{eq:gaussian_call_formula}.
\end{proof}

\begin{remark}[Crude bounds]
\label{rem:crude_bounds}
We record a short verification of $a > \sqrt{2}$ and $c_0 > \sqrt{2}$; these numerical bounds will be used in subsequent developments. 
First, because $H(a) = B$ and $H$ is strictly monotone, it suffices to show that $H(\sqrt{2})>B$.
Since $H (\sqrt{2}) > 2\sqrt{2}e^{-1}$ and
\[
B = \frac{t_0}{2} + \int_{t_0}^\infty e^{-t^2/2} \, \mathrm{d}t
\le \frac{t_0}{2} + \frac{e^{-t_0^2/2}}{t_0} = \frac{t_0}{2} + \frac{1}{2t_0},
\qquad t_0 = \sqrt{2\log 2} \in (1.17,1.18),
\]
one checks readily that $B < 1.02 < 2\sqrt{2}e^{-1} < H(\sqrt{2})$, and hence that $a > \sqrt{2}$.
Similarly, setting $x_1 := \sqrt{2\log 4}$ so that $e^{-x_1^2/2}=1/4$, one writes
$H(x_1) = \sqrt{\log 2} + 2\int_{x_1}^\infty e^{-t^2/2}\,\mathrm{d}t$.
The bound $\phin(x)/\Phib(x)\le x+1/x$ from \cite[Eq.~7.1.13]{AbramowitzStegun}
rearranges to $\int_x^\infty e^{-t^2/2}\,\mathrm{d}t\ge \frac{x}{x^2+1}\,e^{-x^2/2}$,
whence
$H(x_1) \ge \sqrt{\log 2}\cdot\frac{4\log 2 + 2}{4\log 2 + 1} > 1.05 > B$,
and so $a > x_1$.
Consequently $p_0 = 2e^{-a^2/2} < 2e^{-\log 4} = 1/2$, confirming $z > 0$.
For the second inequality, use that $\phin(z)\le \phin(0)=(2\pi)^{-1/2}$ and $B \ge t_0/2 > 0.58$ to see that
$c_0 = B / \phin(z) \ge B \sqrt{2\pi} > 1.45 > \sqrt{2}$.
\end{remark}

\begin{lemma}[A monotone ratio]
\label{lem:Rmono}
Define
\[
R(u) := \frac{\Phib(u/c_0)}{2e^{-u^2/2}} \qquad(u \ge a).
\]
Then $R$ is nondecreasing on $[a, \infty)$.
\end{lemma}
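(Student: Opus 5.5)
Our plan is to differentiate $\log R$ and reduce the sign of $(\log R)'$ to a Mills-ratio inequality at the single endpoint $u=a$, using the crude bounds of Remark~\ref{rem:crude_bounds}. For $u\ge a$ we have
\[
\frac{\mathrm{d}}{\mathrm{d}u}\log R(u) = u - \frac{\phin(u/c_0)}{c_0\,\Phib(u/c_0)}.
\]
Write $m(x):=\phin(x)/\Phib(x)$ for the inverse Mills ratio. Then $R$ is nondecreasing on $[a,\infty)$ as soon as
\[
m(u/c_0) \le c_0 u \qquad \text{for all } u\ge a .
\]

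To establish this, we first use the upper bound $m(x)\le x+1/x$ for $x>0$ (the bound from \cite[Eq.~7.1.13]{AbramowitzStegun} already quoted in Remark~\ref{rem:crude_bounds}); this requires $u/c_0>0$, which holds since $u\ge a>0$. It then suffices to show
\[
\frac{u}{c_0}+\frac{c_0}{u}\le c_0 u, \qquad\text{equivalently}\qquad u^2\Bigl(1-\frac{1}{c_0^2}\Bigr)\ge 1 .
\]
The left-hand side is increasing in $u$, so it suffices to check it at $u=a$.

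At $u=a$, Remark~\ref{rem:crude_bounds} gives $a>\sqrt2$ and $c_0>\sqrt2$, so $a^2>2$ and $1-1/c_0^2>1/2$. Their product therefore exceeds $1$. Hence $(\log R)'>0$ on $[a,\infty)$, and $R$ is in fact strictly increasing.

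The only delicate point is whether these crude bounds are strong enough. With $a^2>2$ and $1-c_0^{-2}>1/2$ they suffice exactly, and the strict inequalities in Remark~\ref{rem:crude_bounds} close the argument; this is presumably why that remark records $a>\sqrt2$ and $c_0>\sqrt2$. If a sharper margin were ever needed, we would fall back on the better bounds $a>\sqrt{2\log4}$ and $c_0>1.45$, which are also available there.
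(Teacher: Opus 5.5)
Your proposal is correct and follows the paper's proof essentially step for step. Like the paper, you differentiate $\log R$, apply the Mills-ratio bound $\phin(x)/\Phib(x)\le x+1/x$, reduce to $u^2(1-1/c_0^2)>1$, and close with $a>\sqrt2$ and $c_0>\sqrt2$ from Remark~\ref{rem:crude_bounds}.
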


\begin{proof}
Differentiate $\log R(u) = \log \Phib (u / c_0) + u^2/2 - \log 2$ to obtain that
\[
\frac{d}{du} \log R(u) = u - \frac{1}{c_0} \frac{\phin(u/c_0)}{\Phib(u/c_0)}.
\]
The Mills ratio bound $\phin(x)/\Phib(x)\le x+1/x$ for $x>0$
\cite[Eq.~7.1.13]{AbramowitzStegun} yields that for $u\ge a$,
\[
\frac{d}{du}\log R(u)
\ge u - \frac{1}{c_0} \left(\frac{u}{c_0} + \frac{c_0}{u}\right)
= u \Bigl(1 - \frac{1}{c_0^2}\Bigr) - \frac{1}{u}.
\]
By Remark~\ref{rem:crude_bounds}, one sees that for $u\ge a$, it holds that $u^2(1-1/c_0^2)>1$, and so this expression is strictly positive.
It thus follows that $R$ is increasing on this same interval.
\end{proof}

\begin{lemma}[Gaussian stop-loss dominates the envelope]\label{lem:gauss_ge_J}
For all $u \ge 0$, we have $g_{c_0} (u) \ge J_G (u)$.
\end{lemma}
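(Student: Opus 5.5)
We split the half-line at $a$ and treat the two pieces of $J_G$ separately.

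\emph{The linear piece, $u\in[0,a]$.} Here $J_G(u)=B-p_0u$, and we invoke Lemma~\ref{lem:tangent} with $g=g_{c_0}$ on $I=[0,\infty)$ and tangency point $u_\star:=c_0z$. By Lemma~\ref{lem:gaussian_call}, $g_{c_0}$ is convex and differentiable. The derivative condition holds because $g_{c_0}'(c_0z)=-\Phib(z)=-p_0$, by the definition of $z$. For the value condition, the formula \eqref{eq:gaussian_call_formula} gives
\[
g_{c_0}(c_0z)=c_0\phin(z)-c_0z\,\Phib(z)=B-p_0\,c_0z,
\]
using $c_0\phin(z)=B$ from \eqref{eq:c0def}. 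Hence $g_{c_0}(u)\ge B-p_0u$ for all $u\ge0$, which settles $[0,a]$. Note that the tangency point is positive, since $z>0$ by Remark~\ref{rem:crude_bounds}. Applied at $u=a$, the same line bound gives $g_{c_0}(a)\ge B-p_0a=J_G(a)$, and this serves as the initial condition for the second piece.

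\emph{The tail piece, $u\ge a$.} Set $D(u):=g_{c_0}(u)-J_G(u)$, where on this range $J_G(u)=\int_u^\infty 2e^{-t^2/2}\,\mathrm{d}t$, since $a>t_0$ implies $s_G=2e^{-t^2/2}$ on $[a,\infty)$. Then $D$ is differentiable, with $D(u)\to0$ as $u\to\infty$ because both terms are tails of integrable functions, and $D(a)\ge0$ by the previous step. Differentiating,
\[
D'(u)=-\Phib(u/c_0)+2e^{-u^2/2}=2e^{-u^2/2}\bigl(1-R(u)\bigr),
\]
with $R$ exactly as in Lemma~\ref{lem:Rmono}, which shows $R$ is nondecreasing on $[a,\infty)$. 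Taking $w(u)=2e^{-u^2/2}>0$, Lemma~\ref{lem:monotone_ratio_principle} yields $D\ge0$ on $[a,\infty)$.

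Combining the two pieces gives $g_{c_0}\ge J_G$ on all of $[0,\infty)$. The only delicate points are already packaged in earlier results: the monotonicity of $R$ (Lemma~\ref{lem:Rmono}, via the Mills ratio and the crude bounds) and the identification of the tangency point $c_0z$. The main thing to check is that the value and slope conditions match exactly through the definitions of $z$ and $c_0$, which the displayed computation confirms.
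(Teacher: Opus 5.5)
Your proposal is correct and follows essentially the same route as the paper: the tangent-line bound at $u_\star=c_0z$ handles $[0,a]$ and supplies $D(a)\ge 0$, and the factorization $D'(u)=2e^{-u^2/2}(1-R(u))$ combined with Lemma~\ref{lem:Rmono} and Lemma~\ref{lem:monotone_ratio_principle} handles $[a,\infty)$. The only cosmetic difference is that you apply Lemma~\ref{lem:tangent} on $[0,\infty)$, so you need $z>0$ from Remark~\ref{rem:crude_bounds}. The paper applies it on all of $\R$ and so avoids that check.
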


\begin{proof}
Set $u_\star := c_0 z$. 
By Lemma~\ref{lem:gaussian_call}, $g_{c_0}$ is convex and differentiable on $\R$ with
$g_{c_0}^{\prime}(u) = - \Phib(u/c_0)$. 
Since $\Phib(z) = p_0$, we check that $g_{c_0}^{\prime}(u_\star) = - p_0$. 
Moreover,
\[
g_{c_0} (u_\star)
=c_0 \, \phin(z) - u_\star \, \Phib(z)
=c_0 \, \phin(z) - p_0 u_\star
=B - p_0 u_\star,
\]
using the definition $c_0 = B/ \phin(z)$. Lemma~\ref{lem:tangent} therefore yields that
\begin{equation}\label{eq:gc_ge_line}
g_{c_0}(u) \ge B - p_0 u \qquad \text{for all } u \in \R.
\end{equation}
In particular, $g_{c_0}(u) \ge J_G(u)$ for $u \in [0,a]$.

For $u \ge a$, define then the difference
\[
D(u) := g_{c_0} (u)- \int_u^\infty s_G (t) \, \mathrm{d}t.
\]
By Lemma~\ref{lem:gaussian_call}, $D$ is differentiable and (noting that $u \ge a > t_0$, whereby $s_G(t) = 2 e^{-t^2/2}$) we can compute
\begin{align*}
D^{\prime} (u)
&= - \Phib (u/c_0) + 2e^{-u^2/2}
= 2e^{-u^2/2} \bigl(1 - R (u) \bigr),
\end{align*}
where $R (u) := \Phib(u/c_0)\big/(2e^{-u^2/2})$. 
By Lemma~\ref{lem:Rmono}, the function $R$ is increasing on
$[a, \infty)$. 
Moreover, by \eqref{eq:gc_ge_line} and the identity $\int_a^\infty s_G (t) \, \mathrm{d}t = B - a p_0$ (using that $H(a) = B$), we can check that
\[
D (a) = g_{c_0} (a) - \int_a^\infty s_G (t) \, \mathrm{d}t \ge (B - a p_0) - (B -a p_0) = 0.
\]
By inspection, one checks that $\lim_{u\to\infty} D (u) = 0$, and Lemma~\ref{lem:monotone_ratio_principle} therefore implies that $D (u) \ge 0$ for all $u\ge a$, i.e.\ $g_{c_0}(u) \ge J_G(u)$ as claimed.
\end{proof}

\begin{proof}[Proof of Theorem~\ref{thm:main}]
Let $Y := c_0 G$. Lemma~\ref{lem:gauss_ge_J} gives that $g_{c_0}(u) = \Ebb[(Y - u)_+]\ge J_G(u)$ for all $u \ge 0$.
Proposition~\ref{prop:envelope_to_cx} therefore yields $X \preceq_{cx} Y$.

For sharpness, let $X^\star$ be the extremizer from Lemma~\ref{lem:extremizer} for the sub-Gaussian envelope
$s_G (t) = \min \{1,2e^{-t^2/2}\}$, so that $\Pbb(|X^\star| > t) = s_G (t)$ for all $t \ge 0$ and $\Ebb [(X^\star - u)_+] = J_G (u)$ for all $u \ge 0$.
Fix $c\in (0, c_0)$ and set $u_c := c z$, where $z = \Phi^{-1} (1-p_0)$ as before.
Lemma~\ref{lem:J_ge_line} yields that $J_G (u_c) \ge B - p_0 u_c$. 
Using Lemma~\ref{lem:gaussian_call} and that $\Phib(z) = p_0$, compute that
\[
\Ebb [(X^\star - u_c)_+] - \Ebb[(c G - u_c)_+]
= J_G (u_c) - g_c (u_c)
\ge (B - p_0 u_c) - \bigl(c \, \phin(z) - p_0 u_c \bigr)
= B - c \, \phin (z)>0,
\]
because $c < c_0 = B / \phin(z)$. Taking $f(x) = (x-u_c)_+$ thus demonstrates that $X^\star \not \preceq_{cx} c G$, and it hence follows that the constant is unimprovable, i.e. $c_\star=c_0$.
\end{proof}

\section{Laplace domination under sub-exponential tail constraints}\label{sec:psi1}

Using the same tools, an analogous comparison is viable for random variables adhering to other tail constraints. In this section, we develop such a result under the assumption of a two-sided sub-exponential tail envelope %
\begin{equation}\label{eq:psi1_envelope}
s_E(t) : =\min \{1,2e^{-t}\},
\qquad t\ge 0
\end{equation}
for which the natural comparator is a scaled standard Laplace random variable $L$ with density $\frac12 e^{-|x|}$ on $\mathbb{R}$. 

In particular, by Proposition~\ref{prop:envelope_to_cx}, the sharp constant for the analog to Theorem~\ref{thm:main} can be determined by identifying the minimal $c_E > 0$ for which $g_{c_E L} (u) \ge J_E(u)$ for all $u \ge 0$, with $g$ the exact stop-loss for the Laplace random variable, and $J_E$ the stop-loss envelope for the family of random variables satisfying the sub-exponential tail constraint.

Towards establishing such a comparison, define
\[
A_E := \int_0^\infty s_E(t) \, \mathrm{d}t, \qquad B_E :=\frac{A_E}{2}, \qquad
H_E(x) := x \, s_E (x) + \int_x^\infty s_E (t) \, \mathrm{d}t.
\]
Let $a_E>0$ satisfy $H_E (a_E) = B_E$ and set $p_E: = s_E (a_E)$. 
Since $H_E(x) = 2e^{-x}(x+1)$ for $x > \log 2$ and $B_E = (\log 2 + 1)/2$, one checks that $H_E(2\log 2) = (2\log 2 + 1)/2 > B_E$, so $a_E > 2\log 2$ and $p_E = 2e^{-a_E} < 1$. 
Let $J_E$ denote the envelope $J_{s_E}$ from Lemma~\ref{lem:envelope}. 
Finally, define
\begin{equation}\label{eq:psi1_constants}
w_E := \log\frac{1}{2 p_E},
\qquad
c_E := \frac{B_E}{p_E ( 1 + w_E)}.
\end{equation}
Note that $w_E = a_E - 2 \log 2$ and $c_E > 1$. In particular, some extended (but elementary) calculations yield that $c_E \approx 1.89389433$.

\begin{theorem}[Sharp one-dimensional convex sub-exponential comparison]\label{thm:psi1}
Let $X$ be integrable with $\Ebb [X] = 0$ and assume the tail constraint
\[
\Pbb(|X| > t) \le \min\{1, 2e^{-t}\} \qquad \text{for all }t\ge 0.
\]
Then $X \preceq_{cx} c_E L$. Moreover, $c_E$ is optimal: for every $c < c_E$ there exists an integrable mean-zero
$X$ satisfying the same tail bound but with $X \not \preceq_{cx} cL$.
\end{theorem}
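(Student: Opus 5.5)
The plan is to run the argument of Section~4 verbatim, with the Gaussian replaced by the Laplace law. By Proposition~\ref{prop:envelope_to_cx}, since $c_E L$ is integrable and symmetric, the upper bound reduces to showing $g_{c_E L}(u) \ge J_E(u)$ for all $u \ge 0$. The first step is the exact Laplace stop-loss. For $c>0$ and $u\ge 0$, Lemma~\ref{lem:layercake} with $\Pbb(cL>t)=\tfrac12 e^{-t/c}$ for $t\ge0$ gives
\[
h_c(u) := \Ebb[(cL-u)_+] = \tfrac{c}{2}e^{-u/c}, \qquad h_c'(u) = -\tfrac12 e^{-u/c}.
\]
The function $h_c$ is convex and differentiable on $[0,\infty)$, which is all that Lemma~\ref{lem:tangent} needs with $I=[0,\infty)$.

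Next, on $[0,a_E]$ I will use a tangent-line argument. Set $u_\star := c_E w_E$. Since $p_E<1/2$ (because $a_E>2\log 2$), we have $w_E>0$ and so $u_\star> 0$. By the definition of $w_E$, $h_{c_E}'(u_\star) = -\tfrac12 e^{-w_E} = -p_E$. Moreover $h_{c_E}(u_\star) = c_E p_E$, and the defining relation $c_E p_E(1+w_E)=B_E$ rewrites this as $h_{c_E}(u_\star)=B_E - p_E u_\star$. Lemma~\ref{lem:tangent} then yields $h_{c_E}(u)\ge B_E - p_E u$ for all $u\ge0$, which covers $J_E$ on $[0,a_E]$.

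On $[a_E,\infty)$, where $s_E(t)=2e^{-t}$ because $a_E>\log 2$, I will set
\[
D(u) := \tfrac{c_E}{2}e^{-u/c_E} - 2e^{-u}.
\]
Its derivative is
\[
D'(u) = 2e^{-u}\bigl(1-R(u)\bigr), \qquad R(u) := \tfrac14 e^{u(1-1/c_E)}.
\]
Here $R$ is nondecreasing precisely because $c_E>1$; this replaces the Mills-ratio computation of Lemma~\ref{lem:Rmono} and is much easier. We have $D(a_E)\ge (B_E-a_Ep_E)-\int_{a_E}^\infty s_E = 0$ by the tangent bound and $H_E(a_E)=B_E$, and clearly $D(u)\to0$. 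Lemma~\ref{lem:monotone_ratio_principle} therefore gives $D\ge0$ on $[a_E,\infty)$.

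For sharpness, I will take $X^\star$ from Lemma~\ref{lem:extremizer} with $t_0=\log 2$. Its hypotheses hold: $s_E$ equals $1$ on $[0,\log 2]$, is strictly decreasing afterwards, and $a_E>2\log2>t_0$. Then $\Ebb[(X^\star-u)_+]=J_E(u)$. For $c<c_E$, put $u_c := c w_E \ge 0$. Lemma~\ref{lem:J_ge_line} gives
\[
J_E(u_c)-h_c(u_c) \ge B_E - p_E u_c - c p_E = B_E - c p_E(1+w_E) >0,
\]
so $f(x)=(x-u_c)_+$ witnesses $X^\star\not\preceq_{cx} cL$.

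The only delicate point is the claim $c_E>1$, which is needed for the monotonicity of $R$. I would verify it from explicit bounds. The equation $2e^{-a_E}(a_E+1)=B_E$ gives $p_E(1+w_E)=p_E(a_E+1-2\log 2)<p_E(a_E+1)=B_E/2$, hence $c_E>2>1$, which disposes of this obstacle cleanly.
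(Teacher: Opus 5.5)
Your proof follows the paper's argument step for step: the same tangent line at $u_\star=c_Ew_E$, the same monotone ratio $R(u)=\tfrac14e^{u(1-1/c_E)}$ on $[a_E,\infty)$, and the same test point $u_c=cw_E$ for the extremizer, and your explicit check of $c_E>1$ fills in a detail the paper only asserts. That check contains a factor-of-two slip: since $p_E=2e^{-a_E}$, the relation $H_E(a_E)=B_E$ reads $p_E(a_E+1)=B_E$ (not $B_E/2$), so your line gives $p_E(1+w_E)=B_E-2p_E\log 2<B_E$ and hence $c_E>1$, which is all you need, but it does not give $c_E>2$ (indeed $c_E\approx 1.894$).
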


\begin{lemma}[Laplace stop-loss transform] \label{lem:laplace_call}
For $c > 0$, define $\ell_c(u) := \Ebb[(c L - u)_+]$. Then for all $u \ge 0$, one has the exact formulae
\[
\ell_c(u) = \frac{c}{2} e^{-u/c},
\qquad
\ell_c^{\prime} (u) = - \frac12 e^{-u/c} = -\Pbb(cL>u).
\]
In particular, $\ell_c$ is convex and decreasing on $[0, \infty)$.
\end{lemma}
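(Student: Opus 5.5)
We will compute $\ell_c$ directly from Lemma~\ref{lem:layercake}, exactly as in the proof of Lemma~\ref{lem:gaussian_call}, after first finding the upper tail of $cL$.

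First, for $t \ge 0$ the density $\frac12 e^{-|x|}$ gives
\[
\Pbb(L > t) = \int_t^\infty \tfrac12 e^{-x}\,\mathrm{d}x = \tfrac12 e^{-t}.
\]
Hence $\Pbb(cL > t) = \Pbb(L > t/c) = \frac12 e^{-t/c}$ for all $t \ge 0$. Since $L$ is integrable, Lemma~\ref{lem:layercake} applies. For $u \ge 0$ every $t \ge u$ is nonnegative, so the tail formula holds on the whole range of integration and
\[
\ell_c(u) = \int_u^\infty \Pbb(cL > t)\,\mathrm{d}t = \int_u^\infty \tfrac12 e^{-t/c}\,\mathrm{d}t = \frac{c}{2}e^{-u/c}.
\]

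The derivative can be read off in either of two ways. We can differentiate the closed form, or apply the fundamental theorem of calculus to the integral representation, using continuity of $t \mapsto \Pbb(cL>t)$. Both give
\[
\ell_c'(u) = -\tfrac12 e^{-u/c} = -\Pbb(cL > u).
\]
At $u = 0$ this is a one-sided derivative if we restrict to $[0,\infty)$. However, $\ell_c$ is differentiable on all of $\R$, because the law of $cL$ has no atoms, so nothing is lost.

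Finally, $\ell_c''(u) = \frac{1}{2c}e^{-u/c} > 0$ and $\ell_c' < 0$ on $[0,\infty)$. Therefore $\ell_c$ is convex and decreasing there. Convexity also follows abstractly, since $\ell_c$ is an expectation of the convex functions $u \mapsto (cx-u)_+$.

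No step here is a genuine obstacle. The only point needing care is to keep $u \ge 0$ when invoking the tail formula $\frac12 e^{-t/c}$, because for negative $t$ the tail is $1 - \frac12 e^{t/c}$ instead.
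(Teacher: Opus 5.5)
Your proof is correct and is essentially the paper's argument, a direct computation. The only difference is that you integrate the tail $\Pbb(cL>t)=\tfrac12 e^{-t/c}$ via Lemma~\ref{lem:layercake} (as in the Gaussian case), whereas the paper integrates $(cx-u)$ directly against the density $\tfrac12 e^{-x}$ on $[u/c,\infty)$; your restriction to $u\ge 0$ and your convexity and monotonicity remarks are accurate.
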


\begin{proof}
For $u\ge 0$, compute that
\[
\ell_c(u)
= \int_{u/c}^\infty (c x - u) \, \frac12 e^{-x} \, \mathrm{d}x
= c\int_{u/c}^\infty (x - u/c) \, \frac12 e^{-x} \, \mathrm{d}x
=\frac{c}{2} e^{-u/c}.
\]
Differentiating gives the expression for $\ell_c^{\prime}(u)$.
\end{proof}

\begin{lemma}[Laplace stop-loss dominates the envelope] \label{lem:laplace_ge_J}
For all $u \ge 0$, we have $\ell_{c_E} (u) \ge J_E (u)$.
\end{lemma}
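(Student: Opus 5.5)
We mirror the proof of Lemma~\ref{lem:gauss_ge_J}: first a tangent-line argument on $[0,a_E]$, then a derivative/ratio argument on $[a_E,\infty)$.

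\emph{Tangency on $[0,a_E]$.} By Lemma~\ref{lem:laplace_call}, $\ell_{c_E}$ is convex and differentiable on $[0,\infty)$ with $\ell_{c_E}'(u) = -\tfrac12 e^{-u/c_E}$. Set $u_\star := c_E w_E$. Then
\[
\ell_{c_E}'(u_\star) = -\tfrac12 e^{-w_E} = -\tfrac12\cdot 2p_E = -p_E,
\]
since $e^{-w_E} = 2p_E$. The value at $u_\star$ is
\[
\ell_{c_E}(u_\star) = \tfrac{c_E}{2}e^{-w_E} = c_E p_E .
\]
By the definition of $c_E$,
\[
B_E - p_E u_\star = B_E - c_E p_E w_E = c_E p_E(1+w_E) - c_E p_E w_E = c_E p_E .
\]
The two agree, so Lemma~\ref{lem:tangent} with $I=[0,\infty)$ gives $\ell_{c_E}(u)\ge B_E - p_E u$ for all $u\ge0$. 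We also need $u_\star\ge 0$, i.e.\ $w_E\ge 0$, i.e.\ $p_E\le 1/2$. This follows from $a_E>2\log 2$, since then $p_E=2e^{-a_E}<1/2$. On $[0,a_E]$, where $J_E(u)=B_E-p_Eu$, this gives $\ell_{c_E}\ge J_E$.

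\emph{The tail $u\ge a_E$.} Here $s_E(t)=2e^{-t}$ because $a_E>\log 2$. Define
\[
D(u):=\ell_{c_E}(u)-\int_u^\infty 2e^{-t}\,\mathrm{d}t = \tfrac{c_E}{2}e^{-u/c_E}-2e^{-u}.
\]
Then $D(u)\to0$ as $u\to\infty$. At the left endpoint, the line bound and $J_E(a_E)=B_E-p_Ea_E$ give $D(a_E)\ge0$. Differentiating,
\[
D'(u)=-\tfrac12e^{-u/c_E}+2e^{-u}=2e^{-u}\bigl(1-R(u)\bigr),\qquad R(u):=\tfrac14 e^{u(1-1/c_E)}.
\]
Since $c_E>1$, $R$ is increasing, and Lemma~\ref{lem:monotone_ratio_principle} yields $D\ge0$ on $[a_E,\infty)$.

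\emph{Main obstacle: showing $c_E>1$.} The paper asserts this but it needs justification. Write $w:=w_E>0$. Using $B_E=(\log2+1)/2$ and $H_E(a_E)=B_E$, i.e.\ $2e^{-a_E}(a_E+1)=B_E$, we get $p_E(a_E+1)=B_E$. Hence
\[
c_E=\frac{a_E+1}{1+w}=\frac{1+w+2\log2}{1+w}>1,
\]
which settles it. Everything else is routine verification.
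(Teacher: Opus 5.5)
Your proof is correct and follows the paper's argument essentially step for step: a tangent line at $u_\star = c_E w_E$ via Lemma~\ref{lem:tangent} handles $[0,a_E]$, and on $[a_E,\infty)$ the difference $D$ with $D'(u) = 2e^{-u}\bigl(1-R(u)\bigr)$, $R(u)=\tfrac14 e^{u(1-1/c_E)}$, is controlled by Lemma~\ref{lem:monotone_ratio_principle}. Your extra checks fill in two facts the paper only asserts: $u_\star>0$ (from $a_E>2\log 2$), and $c_E=(a_E+1)/(1+w_E)>1$ (from $p_E(a_E+1)=B_E$).
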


\begin{proof}
Set $u_\star :=c_E w_E$. 
By Lemma~\ref{lem:laplace_call}, $\ell_{c_E}$ is convex and differentiable on $[0, \infty)$ with $\ell_{c_E}^{\prime} (u) = - \frac12 e^{-u/c_E}$. 
Since $\Pbb(L > w_E) = \frac12 e^{-w_E} = p_E$, we have $\ell_{c_E}^{\prime}(u_\star) = - p_E$.
Moreover, by the definition of $c_E$, we see that
\[
\ell_{c_E} (u_\star) = \frac{c_E}{2} e^{-w_E} = c_E p_E = B_E - p_E u_\star.
\]
Lemma~\ref{lem:tangent} therefore yields that $\ell_{c_E}(u)\ge B_E - p_E u$ for all $u\ge 0$, and hence that
$\ell_{c_E} (u) \ge J_E (u)$ for $u \in [0, a_E]$.

For $u\ge a_E$, define the difference
\[
D(u) := \ell_{c_E} (u) - \int_u^\infty s_E (t) \, \mathrm{d}t.
\]
Since $a_E > \log 2$, we have $s_E (u) = 2e^{-u}$ for all $u \ge a_E$. Lemma~\ref{lem:laplace_call} then gives that
\[
D^{\prime} (u) = - \frac12 e^{-u/c_E} + 2 e^{-u} = 2 e^{-u} \bigl(1 - R (u)\bigr),
\qquad
R (u) := \frac14 \, e^{u (1 - 1/c_E)}.
\]
In particular, since $c_E > 1$, one sees that $R$ is increasing on $[a_E,\infty)$ (and even on all of $\mathbb{R}$). Also, since $H_E (a_E) = B_E$ and $s_E (a_E) = p_E$, we can check that
\[
\int_{a_E}^\infty s_E(t)\, \mathrm{d}t = B_E - a_E p_E
\qquad \text{and} \qquad 
J_E (a_E) = B_E - p_E a_E = B_E - a_E p_E.
\]
We thus see that $D (a_E) = \ell_{c_E} (a_E) - J_E (a_E)\ge 0$ by the first part of the proof. Moreover, by inspection, $\lim_{u\to\infty} D (u) = 0$, and so Lemma~\ref{lem:monotone_ratio_principle} therefore implies that $D(u) \ge 0$ for all $u \ge a_E$, i.e.\ $\ell_{c_E} (u) \ge J_E (u)$ as claimed.
\end{proof}

\begin{proof}[Proof of Theorem~\ref{thm:psi1}]
Let $Y: = c_E L$. 
Lemma~\ref{lem:laplace_ge_J} gives that $g_Y (u) = \Ebb [(Y - u)_+] = \ell_{c_E} (u)\ge J_E (u)$ for all $u \ge 0$.
Proposition~\ref{prop:envelope_to_cx} therefore yields that $X\preceq_{cx}Y$.

For sharpness, let $X^\star$ be the extremizer from Lemma~\ref{lem:extremizer} applied to the envelope $s_E$.
Then $\Pbb(|X^\star| > t) = s_E(t)$ for all $t\ge 0$ and $\Ebb[(X^\star - u)_+] = J_E (u)$ for all $u\ge 0$.
Fix $c\in (0, c_E)$ and set $u_c := c w_E$.
Lemma~\ref{lem:J_ge_line} then yields that $J_E (u_c) \ge B_E - p_E u_c$. 
Using Lemma~\ref{lem:laplace_call} and $\frac12 e^{-w_E} = p_E$, compute that
\[
\Ebb [(X^\star - u_c)_+]-\Ebb[(c L - u_c)_+]
= J_E (u_c) - \ell_c (u_c)
\ge (B_E - p_E u_c) - c \, p_E
= B_E - c p_E (1 + w_E) > 0,
\]
because $c < c_E = B_E / (p_E (1 + w_E))$. Taking $f(x) = (x - u_c)_+$ then shows that $X^\star \not \preceq_{cx} cL$, as claimed.
\end{proof}

\section{Higher-dimensional consequences}

Theorem~\ref{thm:main} is one-dimensional: our proof relies heavily on the stop-loss characterisation of the convex ordering, which does not immediately extend to higher dimension. Nevertheless, we record two applications to stylised high-dimensional settings. The first assumes a meaningful coordinate structure (a martingale decomposition); the second orders expectations only among a restricted class of convex functions.

\subsection*{Tensorization via a sequential (martingale) coordinate representation}

Write $[d]:=\{1,\dots,d\}$.  Throughout this subsection, $(\mathcal F_i)_{i=0}^d$ is a filtration.

\begin{theorem}[Sequential tensorization for convex domination]
\label{thm:sequential_tensorization}
Let $d \ge 1$.  Let $X_1, \dots, X_d$ be integrable real random variables such that $X_i$ is $\mathcal F_i$-measurable.
Let $Y_1, \dots, Y_d$ be integrable real random variables that are mutually independent and independent of $\mathcal F_d$.
Assume that for each $i \in [d]$ and every convex $\phi : \R \to \R$,
\begin{equation}
\label{eq:cond_cx_dom}
\Ebb\! \left[\phi(X_i) \, \middle| \, \mathcal F_{i-1} \right] \le \Ebb [ \phi (Y_i)]
\qquad\text{a.s.}
\end{equation}
Then, for every convex $f : \R^d \to \R$, it holds that
\begin{equation}
\label{eq:sequential_tensor_concl}
\Ebb \big[ f(X_1, \dots, X_d) \big] \le \Ebb \big[ f(Y_1, \dots, Y_d) \big],
\end{equation}
where both sides are understood as extended expectations in $(-\infty, \infty]$.
\end{theorem}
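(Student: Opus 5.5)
We argue by backward induction on the coordinates, replacing $X_d, X_{d-1}, \dots, X_1$ by $Y_d, Y_{d-1}, \dots, Y_1$ one at a time. For $k \in \{0,1,\dots,d\}$ set
\[
Z^{(k)} := (X_1,\dots,X_k, Y_{k+1},\dots,Y_d),
\]
so that $Z^{(d)} = (X_1,\dots,X_d)$ and $Z^{(0)} = (Y_1,\dots,Y_d)$. The goal is $\Ebb[f(Z^{(k)})] \le \Ebb[f(Z^{(k-1)})]$ for each $k \in [d]$; chaining these $d$ inequalities gives \eqref{eq:sequential_tensor_concl}. All expectations are taken in the extended sense. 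This is legitimate because a convex $f$ is bounded below by an affine function $\ell(x) = \langle v, x\rangle + \beta$, and $\ell(Z^{(k)})$ is integrable since every $X_i$ and $Y_i$ is integrable. Hence $\Ebb[f(Z^{(k)})] \in (-\infty,\infty]$ is well defined.

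For the $k$-th step, define for $x = (x_1,\dots,x_k) \in \R^k$
\[
\psi_k(x) := \Ebb\bigl[f(x_1,\dots,x_k, Y_{k+1},\dots,Y_d)\bigr] \in (-\infty,\infty].
\]
Two facts about $\psi_k$ are needed. First, $\psi_k$ is convex in $x$, as an average of convex functions. Second, by independence of $(Y_{k+1},\dots,Y_d)$ from $\mathcal F_d$ and from each other, the freezing/Fubini lemma for conditional expectations gives
\[
\Ebb\bigl[f(Z^{(k)}) \,\big|\, \mathcal F_k\bigr] = \psi_k(X_1,\dots,X_k),
\]
since $X_1,\dots,X_k$ are $\mathcal F_k$-measurable. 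This holds for nonnegative functions and extends to $f$ after subtracting the affine minorant. The same argument, applied to $f(Z^{(k-1)})$ and conditioned on $\mathcal F_{k-1}$, shows that integrating out $Y_k$ yields the value $\Ebb[\psi_k(x_1,\dots,x_{k-1},Y_k)]$ evaluated at $x_j = X_j$.

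Now fix $x' = (x_1,\dots,x_{k-1})$ and set $\phi(y) := \psi_k(x',y)$, a convex function of one variable. Applying \eqref{eq:cond_cx_dom} with $i=k$, with the first $k-1$ coordinates frozen at their $\mathcal F_{k-1}$-measurable values, gives
\[
\Ebb[\psi_k(X_1,\dots,X_{k-1},X_k) \mid \mathcal F_{k-1}] \le \Ebb[\psi_k(x',Y_k)]\big|_{x'=(X_1,\dots,X_{k-1})} \quad \text{a.s.}
\]
Taking expectations and using the tower property then yields $\Ebb[f(Z^{(k)})] \le \Ebb[f(Z^{(k-1)})]$.

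The main obstacle is justifying this last application rigorously, for two reasons. The function $\phi$ depends on the random $\mathcal F_{k-1}$-measurable parameter $x'$, whereas \eqref{eq:cond_cx_dom} is stated for a fixed convex $\phi$. Moreover, $\phi$ may take the value $+\infty$, or be real-valued but not of the type the hypothesis literally covers. I would handle this by approximation in three steps.
\begin{enumerate}
\item Hypothesis \eqref{eq:cond_cx_dom} for all convex $\phi$ is equivalent to the same inequality for the countable family of hinges $(x-u)_+$, $u \in \mathbb{Q}$, together with the affine functions $\pm x$. This gives a single null set off which a regular conditional law of $X_k$ given $\mathcal F_{k-1}$ is dominated in convex order by the law of $Y_k$, via Proposition~\ref{prop:stoploss_to_convex}.
\item On that full-measure event, apply the deterministic inequality $\int \phi \, d\mu_\omega \le \Ebb[\phi(Y_k)]$ to the convex function $\phi = \psi_k(x'(\omega),\cdot)$. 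For extended-valued or nonfinite $\phi$, approximate it from below by an increasing sequence of finite convex functions, for instance maxima of finitely many affine minorants or Moreau envelopes, and pass to the limit by monotone convergence.
\item Measurability in $\omega$ of the resulting expressions follows from joint measurability of $\psi_k$.
\end{enumerate}
Once this is settled, the remaining steps are routine bookkeeping.
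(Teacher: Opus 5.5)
Your proposal is correct and is essentially the paper's argument. The paper likewise subtracts an affine minorant and defines the iterated partial averages $h_{i-1}(x_1,\dots,x_{i-1}) := \Ebb[h_i(x_1,\dots,x_{i-1},Y_i)]$ (your $\psi_k$, up to the affine part); it then applies \eqref{eq:cond_cx_dom} with the first $i-1$ coordinates frozen and iterates from $i=d$ down to $i=1$, working with $h_i(X_1,\dots,X_i)$ directly rather than with hybrid vectors $Z^{(k)}$.

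Your regular-conditional-law step makes rigorous what the paper passes over quickly, namely applying the hypothesis to a random, possibly $+\infty$-valued convex test function. One point to add: the increasing approximation by finite convex functions recovers $\psi_k(x',\cdot)$ itself only because $\psi_k$ is lower semicontinuous. This follows from Fatou's lemma, since $f$ is continuous and bounded below by an affine function of integrable arguments.
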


\begin{proof}
Fix a convex $f: \R^d \to \R$. Since $f$ is convex and proper, we are free to choose an affine minorant $\ell(x) = a^\top x + b \le f(x)$ and set $h := f - \ell \ge 0$.
Since $\ell$ is affine, invoking \eqref{eq:cond_cx_dom} with $\phi(t) = t$ and $\phi(t) = -t$ gives that
$\Ebb [X_i \mid \mathcal F_{i-1}] = \Ebb [Y_i]$ a.s., and hence that $\Ebb [\ell(X)] = \Ebb[ \ell (Y)]$.
It suffices to prove \eqref{eq:sequential_tensor_concl} with $f$ replaced by $h$, i.e. to restrict attention to non-negative convex test functions.

Define $h_d := h$ and for $i = 1, \dots, d$ set
\[
h_{i-1} (x_1, \dots, x_{i-1})
:= \Ebb \big[ h_i (x_1, \dots, x_{i-1}, Y_i)\big].
\]
Each $h_i$ is convex and nonnegative.
Fix $i \in [d]$. Since $(X_1, \dots, X_{i-1})$ is $\mathcal F_{i-1}$-measurable and $t \mapsto h_i (X_1, \dots, X_{i-1}, t)$ is convex, \eqref{eq:cond_cx_dom} yields that
\[
\Ebb\! \left[ h_i (X_1, \dots, X_{i-1}, X_i) \, \middle| \, \mathcal F_{i-1} \right]
\le \Ebb \big[ h_i (X_1, \dots, X_{i-1}, Y_i) \big]
= h_{i-1} (X_1, \dots, X_{i-1}),
\]
using independence of $Y_i$ from $\mathcal F_{i-1}$.
Taking expectations then gives that
$\Ebb [ h_i (X_1, \dots, X_i)] \le \Ebb[ h_{i-1} (X_1, \dots, X_{i-1})]$.
Iterating from $i = d$ down to $i = 1$ yields that $\Ebb [h (X)] \le \Ebb[h_0]$. 
By construction and independence of the $Y_i$, $\Ebb[h_0]=\Ebb[h(Y)]$, and so we conclude.
\end{proof}

\begin{lemma}[Conditional form of Theorem~\ref{thm:main}]
\label{lem:conditional_thm_main}
Let $\mathcal G$ be a sub-$\sigma$-field and let $Z$ be integrable.
Assume that $\Ebb[ Z \mid \mathcal G]=0$ a.s. and
\[
\Pbb \big( |Z| > t \mid \mathcal G \big) \le 2e^{-t^2/2}
\qquad \text{a.s. for all } t \ge 0.
\]
Let $G \sim \cN(0,1)$ be independent of $\mathcal G$.
Then for every convex $\phi : \R \to \R$, it holds that
\[
\Ebb\! \left[ \phi (Z) \, \middle| \, \mathcal G \right] \le \Ebb \big[ \phi (c_0 G) \big]
\qquad \text{a.s.}
\]
\end{lemma}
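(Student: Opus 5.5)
The plan is to reduce the conditional statement to the unconditional Theorem~\ref{thm:main} by using a regular conditional distribution. Since $Z$ is real-valued, there exists a regular conditional distribution $\kappa(\omega,\cdot)$ of $Z$ given $\mathcal G$. For every nonnegative Borel $\psi$ we then have $\Ebb[\psi(Z)\mid\mathcal G](\omega)=\int\psi\,\mathrm{d}\kappa(\omega,\cdot)$ a.s. The same identity holds for $\psi$ integrable with respect to the law of $Z$.

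\emph{Step 1: transfer the hypotheses to $\kappa(\omega,\cdot)$ on a common null-set complement.} Apply the identity with $\psi(x)=|x|$ and $\psi(x)=x$. This shows that for a.e.\ $\omega$ the measure $\kappa(\omega,\cdot)$ has finite first moment and mean $0$. The tail hypothesis holds a.s.\ for each fixed $t$, so it holds simultaneously for all rational $t\ge0$ off a single null set. Right-continuity of $t\mapsto\kappa(\omega,\{|x|>t\})$, together with continuity of $2e^{-t^2/2}$, extends it to all $t\ge0$. The trivial bound by $1$ is automatic, so the constraint is $s_G(t)=\min\{1,2e^{-t^2/2}\}$. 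Consequently, for a.e.\ $\omega$, a random variable with law $\kappa(\omega,\cdot)$ satisfies \eqref{eq:subg_tail}.

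\emph{Step 2: apply Theorem~\ref{thm:main} pointwise.} Fix such an $\omega$. By part 1 of the theorem, $\int\phi\,\mathrm{d}\kappa(\omega,\cdot)\le\Ebb[\phi(c_0G)]$ whenever the right-hand side is finite. If the right-hand side is $+\infty$, the inequality is trivial. The independence of $G$ from $\mathcal G$ is not actually needed here, because $\Ebb[\phi(c_0G)]$ is a deterministic number.

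\emph{Step 3: identify the integral with the conditional expectation.} The one technical point, which is the main obstacle, is making sense of $\Ebb[\phi(Z)\mid\mathcal G]$ when $\phi(Z)$ need not be integrable. I would handle it as in the proof of Theorem~\ref{thm:sequential_tensorization}. Choose an affine minorant $\ell(x)=\alpha x+\beta\le\phi(x)$ and write $\phi=\ell+h$ with $h\ge0$ convex. Then $\Ebb[\ell(Z)\mid\mathcal G]=\beta$ a.s., because $\Ebb[Z\mid\mathcal G]=0$. Also $\Ebb[h(Z)\mid\mathcal G]=\int h\,\mathrm{d}\kappa(\omega,\cdot)$ a.s., since conditional expectation of a nonnegative variable is always defined and the disintegration holds for nonnegative functions. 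The conditional expectation is therefore defined with values in $(-\infty,\infty]$, and it equals $\int\phi\,\mathrm{d}\kappa(\omega,\cdot)$ a.s. Combining this with Step 2 gives the claimed inequality almost surely.
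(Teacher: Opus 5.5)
Your proposal is correct and follows essentially the same route as the paper. Both arguments take a regular conditional law of $Z$ given $\mathcal G$, split $\phi=\ell+h$ with $h\ge 0$ via an affine minorant, apply Theorem~\ref{thm:main} $\omega$-by-$\omega$, and identify $\int h\,\mathrm{d}\kappa(\omega,\cdot)$ with $\Ebb[h(Z)\mid\mathcal G]$. Your Step~1 (a single null set via rational $t$ and right-continuity, plus the a.s.\ finite first moment and zero mean of the conditional law) spells out a detail that the paper only asserts.
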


\begin{proof}
Fix convex $\phi$ and choose an affine minorant $\ell(t) = \alpha t + \beta \le \phi(t)$.
Set $\psi: = \phi - \ell \ge 0$.
Since $G$ is centered and $\Ebb[Z \mid \mathcal G] = 0$, we have that $\Ebb[ \ell (Z) \mid \mathcal G] = \beta$ a.s. and $\Ebb [ \ell (c_0G)] = \beta$.
Now, let $\nu_\omega$ be a regular conditional law of $Z$ given $\mathcal G$.
For $\Pbb$-a.e.\ $\omega$, the one-dimensional law $\nu_\omega$ satisfies the tail constraint \eqref{eq:subg_tail}, so
Theorem~\ref{thm:main} applied to $\nu_\omega$ yields that
$\int \psi\, \mathrm{d}\nu_\omega \le \Ebb [ \psi (c_0 G)]$.
Since $\psi \ge 0$, $\int \psi \, \mathrm{d}\nu_\omega = \Ebb[\psi(Z) \mid \mathcal G](\omega)$.
Adding back $\ell$ gives the claim.
\end{proof}

\begin{corollary}[Dimension-free domination from a martingale-coordinate representation]
\label{cor:martingale_basis}
Fix an orthonormal basis $u_1, \dots, u_d$ of $\R^d$ and let $X \in \R^d$ be integrable.
Define $\xi_i := \langle u_i, X \rangle$ and $\mathcal F_i := \sigma (\xi_1, \dots, \xi_i)$.
Assume that for each $i\in[d]$, there hold the conditional centering and  tail constraints
\[
\Ebb [ \xi_i \mid \mathcal F_{i-1}] = 0
\quad \text{and} \quad
\Pbb \big(|\xi_i| > t \mid \mathcal F_{i-1} \big) \le \min \{ 1,  2e^{-t^2/2} \}
\ \ \text{a.s. for all } t \ge 0.
\]
Let $G \sim \cN (0, I_d)$ be standard Gaussian in $\R^d$.
Then for every convex $f : \R^d \to \R$, there holds the ordering
\[
\Ebb [f (X)] \le \Ebb[f (c_0G)],
\]
where both sides are understood as extended expectations in $(-\infty, \infty]$.
\end{corollary}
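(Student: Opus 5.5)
The plan is to apply Theorem~\ref{thm:sequential_tensorization} in the rotated coordinates $\xi = (\xi_1,\dots,\xi_d)$ and then undo the rotation.

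\textbf{Step 1: rotate.} Let $U$ be the orthogonal matrix whose rows are $u_1,\dots,u_d$. Then $\xi = UX$ and $X = U^\top \xi$. For a given convex $f:\R^d\to\R$, set $\tilde f(y) := f(U^\top y)$. This $\tilde f$ is convex and satisfies $f(X) = \tilde f(\xi)$. Set $Y_i := c_0 G_i'$, where $G' = (G_1',\dots,G_d')\sim\cN(0,I_d)$ is taken independent of $\mathcal F_d$; enlarge the probability space if necessary. The $Y_i$ are mutually independent, integrable, and independent of $\mathcal F_d$. Each $\xi_i$ is $\mathcal F_i$-measurable by construction. Integrability of $\xi_i$ follows from integrability of $X$.

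\textbf{Step 2: verify the hypothesis \eqref{eq:cond_cx_dom}.} This is exactly Lemma~\ref{lem:conditional_thm_main}, applied with $\mathcal G = \mathcal F_{i-1}$ and $Z = \xi_i$. The conditional centering and tail assumptions of the corollary are precisely its hypotheses; for $t<t_0$ the bound with $\min\{1,\cdot\}$ is trivially equivalent. The lemma gives, for every convex $\phi$,
\[
\Ebb[\phi(\xi_i)\mid\mathcal F_{i-1}] \le \Ebb[\phi(c_0 G)] = \Ebb[\phi(Y_i)] \quad \text{a.s.}
\]
Here $\mathcal F_0$ is the trivial $\sigma$-field, so for $i=1$ the conditions are unconditional, and this case is also covered.

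\textbf{Step 3: conclude.} Theorem~\ref{thm:sequential_tensorization} applied to $\tilde f$ yields
\[
\Ebb[f(X)] = \Ebb[\tilde f(\xi)] \le \Ebb[\tilde f(c_0 G')] = \Ebb[f(c_0 U^\top G')].
\]
By rotation invariance of the standard Gaussian, $U^\top G' \overset{d}{=} G$. Hence the right-hand side equals $\Ebb[f(c_0 G)]$, with extended expectations understood as in the theorem.

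The only delicate point is the measure-theoretic bookkeeping: we need $Y$ independent of $\mathcal F_d$ on a common space, which is handled by passing to a product extension, and we need regular conditional laws in Lemma~\ref{lem:conditional_thm_main}, which exist because $\xi_i$ is real-valued. Everything else is a direct invocation of earlier results.
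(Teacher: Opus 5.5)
Your proposal is correct and follows the paper's proof essentially verbatim. The paper likewise verifies \eqref{eq:cond_cx_dom} through Lemma~\ref{lem:conditional_thm_main} with $\mathcal G=\mathcal F_{i-1}$, $Z=\xi_i$ and $Y_i=c_0G_i$ independent of $\mathcal F_d$, applies Theorem~\ref{thm:sequential_tensorization} to $\widetilde f(z)=f(\sum_i z_iu_i)$ (your $f(U^\top z)$), and finishes with $\sum_i G_iu_i\stackrel{d}{=}G$. Your extra remarks on the trivial $\mathcal F_0$, the product extension and the harmless $\min\{1,\cdot\}$ are fine bookkeeping that the paper leaves implicit.
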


\begin{proof}
Let $(G_1, \dots, G_d)$ be i.i.d.\ $\cN (0,1)$ independent of $\mathcal F_d$ and set $Y_i: = c_0 G_i$.
Lemma~\ref{lem:conditional_thm_main} applied with $\mathcal G = \mathcal F_{i-1}$ and $Z = \xi_i$ yields that
\[
\Ebb\! \left[ \phi (\xi_i) \, \middle| \, \mathcal F_{i-1} \right] \le \Ebb[\phi (Y_i)]
\qquad \text{a.s. for every convex } \phi : \R \to \R.
\]
Applying then Theorem~\ref{thm:sequential_tensorization} to (overloading notation momentarily) $(X_1, \dots, X_d) = (\xi_1, \dots, \xi_d)$ and $(Y_1, \dots, Y_d)$, we obtain that for every convex $\widetilde f : \R^d \to \R$,
\[
\Ebb[\widetilde f( \xi_1, \dots, \xi_d)] \le \Ebb[\widetilde f (Y_1, \dots, Y_d)].
\]
Take $\widetilde f (z_1, \dots, z_d) := f(\sum_{i=1}^d z_i u_i)$ to obtain that
$\Ebb [f (X)] \le \Ebb[f (\sum_{i=1}^d Y_i u_i)]$.
Since $\sum_{i=1}^d G_i u_i\stackrel{d}{=}G$, the right-hand side equals $\Ebb[f(c_0G)]$.
\end{proof}

We emphasise that this is a genuine multivariate convex ordering, meaning that the conclusion holds for all convex $f : \R^d \to \R$.

\subsection*{Domination for the cone generated by convex ridge functions}

\begin{theorem}[Convex domination for nonnegative ridge combinations]
\label{thm:ridge_cone_domination}
Let $X\in\R^d$ be integrable and satisfy the vector tail bound
\begin{equation}
\label{eq:vector_subg_tail}
\Ebb [X] = 0
\qquad\text{and}\qquad
\Pbb \big( | \langle v, X \rangle| > t \big) \le 2e^{-t^2/2}
\quad \text{for all } v \in \R^d \text{ with } \|v\|_2 = 1,\ t \ge 0.
\end{equation}
Let $G \sim \cN(0,I_d)$.
Fix a measurable $f : \R^d \to \R$ that admits a representation
\begin{equation}
\label{eq:ridge_rep}
f(x) = b + a^\top x + \sum_{k=1}^m \lambda_k \, \phi_k (\langle u_k, x \rangle),
\end{equation}
with $m \in \N$, $a, u_k \in \R^d$, $b \in \R$, $\lambda_k \ge 0$, and each $\phi_k :\R \to \R$ convex, or is a pointwise increasing limit of such functions.
Then there holds the ordering
\[
\Ebb [f (X)] \le \Ebb [f (c_0 G)],
\]
where both sides are understood as extended expectations in $(-\infty, \infty]$.
\end{theorem}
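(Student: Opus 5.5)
The plan is to reduce to the one-dimensional Theorem~\ref{thm:main}, applied ridge by ridge, and then pass to monotone limits.

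\textbf{Finite representations.} Fix $f$ of the form \eqref{eq:ridge_rep}.

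\emph{Affine part.} Since $\Ebb[X]=0=\Ebb[G]$, the term $b+a^\top x$ has the same expectation, namely $b$, under $X$ and under $c_0G$.

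\emph{Ridge terms.} Take a term $\lambda_k\phi_k(\langle u_k,x\rangle)$.
\begin{itemize}
\item If $u_k=0$, the term is the constant $\lambda_k\phi_k(0)$, and both sides agree.
\item Otherwise write $u_k=r_kv_k$ with $r_k=\|u_k\|_2>0$ and $\|v_k\|_2=1$.
\end{itemize}
The scalar $\langle v_k,X\rangle$ is integrable, has mean zero, and satisfies $\Pbb(|\langle v_k,X\rangle|>t)\le 2e^{-t^2/2}$. Since probabilities are at most $1$, it therefore satisfies \eqref{eq:subg_tail}. The function $\psi_k(s):=\phi_k(r_ks)$ is convex on $\R$. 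Moreover $\langle v_k,c_0G\rangle\stackrel{d}{=}c_0G_1$ with $G_1\sim\cN(0,1)$. Hence Theorem~\ref{thm:main}(1) gives
\[
\Ebb[\phi_k(\langle u_k,X\rangle)]\le\Ebb[\phi_k(\langle u_k,c_0G\rangle)]
\]
whenever the right side is finite.

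\emph{Handling infinite values.} To work with extended expectations, first subtract an affine minorant $\alpha_k s+\beta_k\le\psi_k(s)$. The difference $\psi_k-\ell_k\ge 0$ is again convex, and the affine parts match in expectation because $X$ and $G$ are both centred. The bound from Proposition~\ref{prop:stoploss_to_convex} comes from a Tonelli argument on the hinge representation \eqref{eq:hinge_representation} with nonnegative integrands. That argument shows $\Ebb[h(\cdot)]\le\Ebb[h(c_0G_1)]$ for every nonnegative convex $h$, in $[0,\infty]$, so the inequality holds even when the right side is infinite.

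\emph{Summing.} Summing these bounds with the weights $\lambda_k\ge0$, and using linearity in $(-\infty,\infty]$ for sums of terms whose negative parts are integrable, gives $\Ebb[f(X)]\le\Ebb[f(c_0G)]$.

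\textbf{Monotone limits.} Now let $f_n\uparrow f$ pointwise, with each $f_n$ of the form \eqref{eq:ridge_rep}. Each $f_n$ is bounded below by $f_1$, and $f_1$ is a sum of affine functions of $X$ plus nonnegative parts, so $f_1(X)^-$ and $f_1(c_0G)^-$ are integrable; note that $X$ is integrable and its ridge minorants are affine. By the monotone convergence theorem applied to $f_n-f_1\ge0$ on both sides, $\Ebb[f_n(X)]\uparrow\Ebb[f(X)]$ and likewise for $c_0G$. Passing to the limit in $\Ebb[f_n(X)]\le\Ebb[f_n(c_0G)]$ finishes the proof.

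\textbf{Main obstacle.} The substantive input is entirely Theorem~\ref{thm:main}. What needs care is the bookkeeping with extended expectations, specifically ensuring the negative part of $f_1$ is integrable under both laws. I would handle this by splitting $f_1$ into its affine part plus nonnegative convex ridge terms, as in the proofs of Theorem~\ref{thm:sequential_tensorization} and Lemma~\ref{lem:conditional_thm_main}.
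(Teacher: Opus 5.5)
Your proposal is correct and follows essentially the same route as the paper. Both arguments subtract affine minorants from each ridge term, apply Theorem~\ref{thm:main} to the nonnegative convex function $\theta\mapsto\psi_k(\|u_k\|_2\theta)$ of the unit-direction projection, sum via Tonelli, and pass to monotone limits. The only cosmetic difference is in the limit step: you use $f_1$ itself as the baseline, checking that its negative part is integrable, whereas the paper subtracts an affine minorant of $f_1$ so that Step~1 applies directly to the nonnegative ridge sums $f_n-\ell$.
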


\begin{remark}[Ridge Convexity]
The cone of functions considered herein could be termed the class of `ridge-convex' functions. In dimensions $d \ge 2$, these form a strict subset of the cone of all convex functions on $\mathbb{R}^d$. As such, the `linear convex' ordering established by Theorem~\ref{thm:ridge_cone_domination} is in general strictly weaker than the `full' convex ordering which one might seek (and indeed, which the result of \cite{vanHandelSubgComparison} encourages one to seek).
\end{remark}

\begin{proof}
\emph{Step 1: the finite ridge case.}
Assume $f$ has the form \eqref{eq:ridge_rep}.
For each $k$, pick an affine minorant $\ell_k(t) = \alpha_k t + \beta_k \le \phi_k (t)$ and set $\psi_k := \phi_k-\ell_k \ge 0$.
Rewrite
\[
f(x) = b^{\prime}+(a^{\prime})^\top x + \sum_{k=1}^m \lambda_k \, \psi_k (\langle u_k, x \rangle),
\]
where $b^{\prime} := b + \sum_{k=1}^m \lambda_k\beta_k$ and $a^{\prime} := a + \sum_{k=1}^m \lambda_k \alpha_k u_k$, i.e. we again reduce to the setting of non-negative convex test functions.
Since $\psi_k\ge 0$ and $\lambda_k\ge 0$, Tonelli's Theorem gives that
\begin{equation}
\label{eq:expectation_tonelli}
\Ebb[f (Z)]
= b^{\prime} + (a^{\prime})^\top \Ebb[Z] + \sum_{k=1}^m \lambda_k \, \Ebb \big[\psi_k (\langle u_k, Z \rangle )\big]
\qquad \text{for } Z \in \{X, c_0 G\}.
\end{equation}
The affine term vanishes for both $Z = X$ and $Z = c_0G$ by \eqref{eq:vector_subg_tail}.

Fix $k$ with $u_k \ne 0$ and set $v_k := u_k / \|u_k\|_2$.
Then $Z_k := \langle v_k, X\rangle$ satisfies \eqref{eq:subg_tail} by \eqref{eq:vector_subg_tail}.
Apply Theorem~\ref{thm:main} to $Z_k$ with the convex test function $\theta \mapsto \psi_k( \|u_k\|_2\, \theta)$ to obtain that
\[
\Ebb \big[ \psi_k (\langle u_k, X \rangle) \big] \le \Ebb \big[ \psi_k(\langle u_k, c_0 G \rangle) \big].
\]
Summing over $k$ in \eqref{eq:expectation_tonelli} yields that $\Ebb [f (X)] \le \Ebb[f (c_0 G)]$ for ridge sums.

\smallskip
\emph{Step 2: monotone limits.}
Let $f_n$ be ridge sums with $f_n\uparrow f$ pointwise.
Choose an affine minorant $\ell \le f_1$ and set $g_n := f_n-\ell \uparrow g := f - \ell$, so $g_n, g \ge 0$.
Step~1 gives that $\Ebb [g_n (X)] \le \Ebb [g_n (c_0 G)]$, and monotone convergence hence yields that $\Ebb [g (X)] \le \Ebb [g (c_0 G)]$; adding back $\ell$ (whose expectations match) then gives the final claim.
\end{proof}

\bibliographystyle{alpha}
\bibliography{references}

\end{document}